\numberwithin{equation}{section}
\newtheorem{cor}[equation]{Corollary}
\newtheorem{lem}[equation]{Lemma}
\newtheorem{thm}[equation]{Theorem}
\theoremstyle{definition}
\def\R{\mathbb R}
\newcommand{\id}{\operatorname{id}}
\begin{document}


\title{Small eigenvalues of surfaces}
\author{Werner Ballmann}
\address
{WB: Hausdorff Center for Mathematics,
Endenicher Allee 60, 53115 Bonn,
and Max Planck Institute for Mathematics,
Vivatsgasse 7, 53111 Bonn.}
\email{hwbllmnn\@@mpim-bonn.mpg.de}
\author{Henrik Matthiesen}
\address
{HM: Mathematisches Institut, Endenicher Allee 60, 53115 Bonn.}
\email{henrik\_matthiesen\@@gmx.de}
\author{Sugata Mondal}
\address
{SM: Max Planck Institute for Mathematics, Vivatsgasse 7, 53111 Bonn.}
\email{sugata.mondal\@@mpim-bonn.mpg.de}

\thanks{\emph{Acknowledgments.}
We would like to thank the Max Planck Institute for Mathematics in Bonn
for its support and hospitality.}

\date{\today}

\subjclass{58J50, 35P15, 53C99}
\keywords{Laplace operator, small eigenvalues, Euler characteristic}

\begin{abstract}
We show that the Laplacian of a Riemannian metric on a closed surface $S$
with Euler characteristic $\chi(S)<0$ has at most $-\chi(S)$ small eigenvalues.
\end{abstract}

\maketitle

\section{Introduction}
\label{intro}

Relations between the spectrum of the Laplacian
and the geometry and topology of the underlying Riemannian manifold
are a fascinating topic at the crossroads of a number of mathematical fields.
We are concerned with the case of closed Riemannian surfaces $S$.
Then the spectrum of the Laplacian $\Delta$ is discrete
and consists of eigenvalues with finite multiplicity.
We enumerate these in increasing order,
\[ 0 = \lambda_0 < \lambda_1 \le \lambda_2 \le \dots \]
where an eigenvalue is counted as often as its multiplicity requires.

In \cite{Bu1,Bu2,S1,S2},
Peter Buser and Paul Schmutz studied the Laplace operator
of hyperbolic metrics, that is, Riemannian metrics of constant Gauss curvature $-1$,
on closed orientable surfaces $S=S_g$ of genus $g\ge2$.
Based on their work, they conjectured
that the Laplace operator of a hyperbolic metric on $S_g$ has at most $2g-2$ small eigenvalues.
These are eigenvalues below $1/4$,
the bottom of the $L^2$-spectrum of the Laplacian on the hyperbolic plane.

In \cite{OR},
Jean-Pierre Otal and Eulalio Rosas proved a general version of this conjecture,
namely that $\lambda_{2g-2}>\lambda_0(\tilde S)$
for any real analytic Riemannian metric of negative curvature on $S_g$,
where $\lambda_0(\tilde S)$ is the bottom of the $L^2$-spectrum of the Laplacian
on the universal covering surface $\tilde S$ of $S_g$,
endowed with the pull back of the Riemannian metric of $S_g$.
In his thesis \cite{Mo} (see also \cite{Mo1}), the third named author showed that,
for any hyperbolic metric on $S_g$,
there is a constant $\epsilon$, which only depends on the systole of the metric,
such that $\lambda_{2g-2}\ge1/4+\epsilon$.
In his Bachelor thesis \cite{Ma}, the second named author showed
that the assumption of negative curvature in the result of Otal and Rosas can be omitted.
Since smooth Riemannian metrics can be approximated by real analytic metrics,
the latter result implies that
$\lambda_{2g-2}\ge\lambda_0(\tilde S)$ for any smooth Riemannian metric on $S_g$.
Our main result, \cref{theo} below, improves this weak to a strict inequality.

For a domain $\Omega$ in a Riemannian surface,
the \emph{bottom of the $L^2$-spectrum} of the Laplacian on $\Omega$ is given by
\begin{equation}\label{bottom}
  {\lambda_0}(\Omega) = \inf\{\mathcal{R}(\phi)\mid\phi\in C^\infty_{cc}(\Omega), \phi\ne0\},
\end{equation}
where $\mathcal R(\phi)=\int\|\nabla\phi\|^2/\int\phi^2$
denotes the \emph{Rayleigh quotient} of $\phi$.
If $\Omega$ is compact with piecewise smooth boundary,
then $\lambda_0(\Omega)$ is {the first Dirichlet eigenvalue} of $\Omega$,
that is, $\lambda_0(\Omega)$ is the smallest $\lambda\in\R$ such that the problem
\begin{equation}
  \Delta\phi = {\lambda\phi} \quad\text{on $\mathring\Omega$}, \quad
  \phi = 0 \quad\text{on $\partial \Omega$},
\end{equation}
admits a non-zero solution $\phi$ which is smooth on $\mathring\Omega$
and continuous on $\bar{\Omega}$.
From this characterization and \eqref{bottom} it is evident that,
for any two compact domains $\Omega_1$ and $\Omega_2$ with piecewise smooth boundary,
\begin{equation}\label{dineq}
{\lambda_0}(\Omega_1) > {\lambda_0}(\Omega_2)
\;\;\text{whenever}\;\; \Omega_1 \subsetneq \Omega_2.
\end{equation}
Suppose now that $S$ is a closed Riemannian surface and let
\begin{equation}\label{dam}
  \Lambda_D(S) = \inf\lambda_0(\Omega), \:
  \Lambda_A(S) = \inf\lambda_0(\Omega), \;
  \Lambda_C(S) = \inf\lambda_0(\Omega),
\end{equation}
where $\Omega$ runs over all compact embedded discs respectively annuli
respectively cross caps in $S$ with piecewise smooth boundary.
In each case, the infimum may also be taken over all finite graphs $G$ in $S$
such that $S\setminus G$ is an open disc, annulus, or cross cap.
Furthermore, we set
\begin{equation}\label{geomc}
  \Lambda(S) = \min\{\Lambda_D(S),\Lambda_A(S),\Lambda_C(S)\}.
\end{equation}
Our main result is the following

\begin{thm}\label{theo}
For any closed surface $S$  with Euler characteristic $\chi(S)<0$, we have
\begin{equation*}
  \lambda_{-\chi(S)}>\Lambda(S).
\end{equation*}
\end{thm}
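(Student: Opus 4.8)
The plan is to argue by contradiction. Assume $\lambda_{-\chi(S)}\le\Lambda(S)$ and write $n=-\chi(S)$. Then the span $E=\operatorname{span}\{\phi_0,\dots,\phi_n\}$ of eigenfunctions for $\lambda_0,\dots,\lambda_n$ has dimension $n+1$, and every $\phi\in E\setminus\{0\}$ satisfies $\mathcal{R}(\phi)\le\Lambda(S)$. I want to reach a contradiction by producing a single $\phi\in E$ all of whose nodal domains --- the connected components of $S\setminus\phi^{-1}(0)$ --- are embedded discs, annuli, or cross caps. The reason this suffices is a strict domain-monotonicity remark: if $\Omega$ is an embedded disc (resp.\ annulus, cross cap) with piecewise smooth boundary, then, since $S$ is closed with $\chi(S)<0$ and so is not itself a disc, annulus, or cross cap, one can enlarge $\Omega$ within discs (resp.\ annuli, cross caps) to some $\Omega'\supsetneq\Omega$ by attaching a collar along a suitable boundary circle, whence \eqref{dineq} gives $\lambda_0(\Omega)>\lambda_0(\Omega')\ge\Lambda(S)$. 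Thus $\lambda_0(\Omega)>\Lambda(S)$ for every such $\Omega$. If now $\phi\in E$ has all nodal domains $\Omega_1,\dots,\Omega_m$ of this kind, then decomposing the Rayleigh quotient over them,
\[
  \int_S\|\nabla\phi\|^2 = \sum_i\int_{\Omega_i}\|\nabla\phi\|^2 \ge \sum_i\lambda_0(\Omega_i)\int_{\Omega_i}\phi^2 > \Lambda(S)\int_S\phi^2,
\]
so $\mathcal{R}(\phi)>\Lambda(S)$, contradicting $\phi\in E$.

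The heart of the matter is therefore to find such a $\phi$ in $E$. A warm-up is the case of an honest eigenfunction $\phi_k$ with $\lambda_k\le\Lambda(S)$: there $\lambda_0(\Omega_i)=\lambda_k$ on each nodal domain, so by the remark above no $\Omega_i$ can be a disc, annulus, or cross cap, i.e.\ $\chi(\Omega_i)\le-1$ for each $i$; combined with the Euler-characteristic identity $\chi(S)=\chi(\phi_k^{-1}(0))+\sum_i\chi(\Omega_i)$ and the fact that the nodal set of an eigenfunction is a graph with no free ends (all vertices of even degree $\ge4$), so $\chi(\phi_k^{-1}(0))\le0$, this bounds the number of nodal domains of $\phi_k$ by $-\chi(S)$. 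That bound on a single eigenfunction is not enough, so one must work with all of $E$: I would run an argument in the spirit of Otal--Rosas. For $[\phi]\in\mathbb{P}(E)\cong\mathbb{RP}^{-\chi(S)}$ one analyzes the graph $\phi^{-1}(0)$ and the induced decomposition of $S$; a complementary piece is ``bad'' (fails to be a disc, annulus, or cross cap) exactly when $\chi(\Omega_i)\le-1$, and every bad piece contributes negatively to the fixed quantity $\chi(S)=\chi(\phi^{-1}(0))+\sum_i\chi(\Omega_i)$. One then argues, by a min-max/continuity argument over the projective space $\mathbb{P}(E)$ that tracks how the bad pieces persist or merge as $[\phi]$ varies --- in the style of the proof of Courant's nodal domain theorem --- that if \emph{every} $\phi\in E$ had a bad piece, the cumulative negative contribution would be incompatible with $\chi(S)$ once $\dim E>-\chi(S)$. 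This produces a $\phi\in E$ with only good pieces, and the first paragraph closes the argument.

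The main obstacle is precisely this last step: converting the static Euler-characteristic bookkeeping into a statement about the whole pencil $E$, i.e.\ controlling the combinatorics of the nodal decompositions uniformly over $[\phi]\in\mathbb{P}(E)$. Two further points need care. First, for merely smooth (not real-analytic) metrics one must know that the zero set of $\phi\in E$ --- a finite linear combination of eigenfunctions, not an eigenfunction itself --- is still a well-behaved graph, so that both the Euler-characteristic identity and the ``enlarge by a collar'' construction make sense; one either invokes the regularity theory of nodal-type sets on surfaces directly, or reduces to real-analytic metrics by approximation, in which case the strictness of the final inequality has to be recovered by hand --- and it is exactly the strict domain monotonicity of the first paragraph that makes this possible, in place of a comparison with the universal cover. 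Second, the bookkeeping and the collar construction must be arranged so that non-orientable $S$ and cross-cap pieces are handled on the same footing as discs and annuli, which is the reason the three quantities $\Lambda_D(S)$, $\Lambda_A(S)$, $\Lambda_C(S)$ are bundled together into $\Lambda(S)$.
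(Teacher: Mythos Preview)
Your overall shape is right --- work on the projective space $\mathbb{P}(E)$, show that every $\phi\in\mathbb{S}$ contributes at least one ``bad'' (negative Euler characteristic) piece to $S$, and convert this into a dimension bound --- and your first paragraph, using strict domain monotonicity to force every nodal domain of a $\phi$ with $\mathcal{R}(\phi)\le\Lambda(S)$ to have $\chi<0$, is exactly what the paper does in Case~2 of \cref{lemeul}. But the two steps you flag as ``main obstacles'' are in fact genuine gaps that your proposal does not close.

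First, the zero set of a generic $\phi\in E$ (a linear combination of eigenfunctions for possibly different eigenvalues) need not be a finite graph: there is no Bers--Cheng local normal form in this generality, so your Euler-characteristic identity $\chi(S)=\chi(\phi^{-1}(0))+\sum_i\chi(\Omega_i)$ and your collar construction are not available. Your two proposed workarounds both fail. There is no ``regularity theory of nodal-type sets'' to invoke for such $\phi$, and approximation by real-analytic metrics loses precisely the strict inequality you are after: one gets only $\lambda_{-\chi(S)}\ge\Lambda(S)$ in the limit, and your strict domain monotonicity is a statement within a fixed metric, not across a sequence of metrics, so it cannot recover the strictness. The paper's solution is to abandon $Z_\phi$ altogether and work with the $\epsilon$-approximate nodal set $Z_\phi(\epsilon)=\{|\phi|\le\epsilon\}$ for $\phi$-regular $\epsilon$. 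This is always a smooth subsurface by Sard, needs nothing beyond calculus, and its isotopy type is locally constant in $\phi$ by the implicit function theorem (\cref{lemiso3}). The bad piece is then extracted from the complement $Y_\phi(\epsilon)$ by discarding disc-, annulus- and cross-cap components, and the stabilisation as $\epsilon\to0$ is controlled by monotonicity (\cref{lemeul4}, \cref{lemeul6}). Regularity of actual nodal sets is invoked only once, in Case~2 of \cref{lemeul}, for a genuine eigenfunction.

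Second, your ``min-max/continuity argument in the style of Courant'' is too vague, and the mechanism you describe (``cumulative negative contribution to $\chi(S)$'') is not what actually runs. The paper partitions $\mathbb{S}$ into at most $-\chi(S)$ pieces $\mathcal{C}_i$, $\chi(S)\le i<0$, according to the stable Euler characteristic of the bad subsurface $X_\phi(\epsilon)$, and then shows that the antipodal map $\phi\mapsto-\phi$ never fixes an isotopy class: this uses $X^+_{-\phi}(\epsilon)=X^-_\phi(\epsilon)$ together with the mapping-class-group fact (\cref{ivan}) that no isotopy of $S$ can swap two disjoint incompressible subsurfaces of negative Euler characteristic. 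Hence the double cover $\mathbb{S}\to\mathbb{P}$ is trivial over each $\mathcal{B}_i=\pi(\mathcal{C}_i)$, and S\'evennec's lemma converts ``$\mathbb{P}$ partitioned into at most $-\chi(S)$ sets over which the cover trivialises'' into $\dim\mathbb{P}<-\chi(S)$. A Courant-style count of nodal domains of a single $\phi$ does not globalise to $\mathbb{P}(E)$ in the way you suggest; the dimension bound comes from the cohomology of $\mathbb{P}$, not from summing Euler characteristics.
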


Observe that any embedded disc, annulus or cross cap with piecewise smooth boundary in $S$
can be lifted isometrically to $\tilde S$ or a cyclic quotient of $\tilde{S}$.
Hence, by Theorem 1 in \cite{Br2} and \eqref{bottom},
we have $\Lambda(S)\ge\lambda_0(\tilde{S})$.
In view of \eqref{dineq} we suspect, however, that always $\Lambda(S)> \lambda_0(\tilde{S})$.
Indeed, for orientable closed surfaces with hyperbolic metrics, we have
\begin{equation}
  \Lambda(S) \ge 1/4 + \delta(S)
\end{equation}
by Theorem 1.1 in \cite{Mo} (or, respectively, Theorem 2.1.4 in \cite{Mo1}),
where
\begin{equation}
\delta(S) = \min \{\frac{\pi}{|S|},\frac{s(S)^2}{|S|^2} \} > 0
\end{equation}
with $s(S)$ and $|S|$ denoting the systole and the area of $S$, respectively.

We emphasize that our proof of \cref{theo} gives the strict inequality
$\lambda_{-\chi(S)}>\lambda_0(\tilde S)$ as opposed to the weak inequality,
which would follow from \cite{Ma}
(at least in the orientable case, as explained further up).
The main line of the proof of \cref{theo} follows \cite{OR}.
As in that reference,
our arguments rely mostly on rather elementary topological properties of surfaces.
However, we do not (and cannot) use the regularity theory of real analytic functions.
Instead, we mostly invoke arguments from the elementary calculus of smooth functions.

\newpage
\section{Approximate nodal sets and domains}
In what follows, $S$ is a closed Riemannian surface with negative Euler characteristic.
We denote by $\Delta$ the positive Laplacian of $S$.
For any $\lambda\ge0$,
we let $\mathbb E_\lambda=\{\phi\in L^2(M)\mid\Delta\phi=\lambda\phi\}$
be the $\lambda$-eigenspace of $\Delta$ in $L^2(M)$
(where we allow for $\mathbb E_\lambda=\{0\}$).
We let
\begin{equation}
  \mathbb E = \oplus_{\lambda\le\Lambda(S)}\mathbb E_\lambda
\end{equation}
and $\mathbb{S}$ be the unit sphere in $\mathbb E$ 
with respect to the $L^2$-norm.
The assertion of \cref{theo} is that $\dim\mathbb E\le-\chi(S)$.

Recall that any eigenfunction of $\Delta$ is smooth (elliptic regularity).
Hence each function in $\mathbb{S}$ is smooth.
For any $\phi \in \mathbb{S}$,
\begin{equation}\label{defnos}
  Z_\phi := \{x\in S\mid \phi(x)=0\}
\end{equation}
is called the \emph{nodal set} of $\phi$.
The connected components of the complement $S\setminus Z_\phi$
are called \emph{nodal domains} of $\phi$.

\begin{lem}\label{lemnae}
For almost any $x\in Z_\phi$, we have $\nabla\phi(x)=0$.
\end{lem}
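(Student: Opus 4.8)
\emph{Plan of proof.}
Here ``almost any'' is understood with respect to the Riemannian area measure of $S$, so the assertion is equivalent to the statement that
\[
  Y := \{x\in Z_\phi \mid \nabla\phi(x)\neq0\}
\]
is a set of measure zero. The guiding observation is that away from the zeros of $\nabla\phi$ the nodal set is locally a smooth arc, and a smooth arc has no area.

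In detail, I would fix a point $x\in Y$. Since $\phi$ is smooth (elliptic regularity, as recalled just before the lemma) and $\nabla\phi(x)\neq0$, the implicit function theorem furnishes a smooth chart centred at $x$ in which $Z_\phi$ coincides, near $x$, with the graph $\{t=g(s)\}$ of a smooth function $g$ of one coordinate. Each vertical line $\{s=c\}$, $c\in\R$, meets such a graph in a single point, so by Fubini the graph is a null set in the chart; hence $x$ has a neighbourhood $U_x$ in $S$ with $Z_\phi\cap U_x$ of area zero. As $S$ is a closed surface, hence second countable, the cover $\{U_x\}_{x\in Y}$ of $Y$ admits a countable subcover $\{U_{x_i}\}_i$, and then $Y\subseteq\bigcup_i(Z_\phi\cap U_{x_i})$ is a countable union of null sets, hence null. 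This would prove the lemma.

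If one prefers an argument avoiding the implicit function theorem: $Z_\phi$ is closed, so by the Lebesgue density theorem almost every $x\in Z_\phi$ is a point of density one for $Z_\phi$. Fix such an $x$ and suppose $v:=\nabla\phi(x)\neq0$. In geodesic normal coordinates centred at $x$ one has $\phi(h)=\langle v,h\rangle+o(|h|)$, and, since $\phi$ vanishes on $Z_\phi$, every $h$ with $|h|\le r$ representing a point of $Z_\phi$ satisfies $|\langle v,h\rangle|\le\ve(r)|h|$ with $\ve(r)\to0$ as $r\to0$. Thus, near $x$, $Z_\phi$ lies inside a double cone about the line $v^{\perp}$ whose opening angle tends to $0$ with $r$; such a cone has density $0$ at $x$, contradicting the choice of $x$. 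Hence $v=0$, which is again the assertion.

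I do not expect a genuine obstacle: the substance is just that the nodal set is a one--dimensional graph wherever $\nabla\phi\neq0$, and such graphs carry no area. The only points needing minor care are the (harmless) comparison of the Riemannian metric with the Euclidean metric of the chart and the routine covering bookkeeping; notably, the argument stays within elementary calculus and never invokes the sharper fact that $Z_\phi$ itself has measure zero, in keeping with the philosophy announced in the introduction.
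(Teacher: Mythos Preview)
Your second argument is exactly the paper's: the paper simply observes that the set of density points of $Z_\phi$ has full measure in $Z_\phi$ and that $\nabla\phi$ vanishes at every such point---your cone computation fills in the step the paper compresses into the word ``clearly''.

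Your first argument (implicit function theorem plus countable cover) is a genuinely different route. Instead of appealing to the Lebesgue density theorem, you show directly that the set $Y=\{x\in Z_\phi\mid\nabla\phi(x)\ne0\}$ is contained in a smooth one-manifold and hence has area zero. This is perfectly correct and in some sense more concrete: it identifies the structure of $Z_\phi$ near regular points rather than arguing by contradiction about densities. The density-theorem argument, on the other hand, is shorter and uses nothing about $\phi$ beyond differentiability at the single point in question; it would apply verbatim to any $C^1$ function, with no covering bookkeeping needed. Either is adequate here.
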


\begin{proof}
The set of points of density of $Z_\phi$ has full measure in $Z_\phi$,
and, clearly, $\nabla\phi(x)=0$ at any such point $x$.
\end{proof}

We say that $\epsilon>0$ is \emph{regular} or, more precisely, \emph{$\phi$-regular},
if $\epsilon$ and $-\epsilon$ are regular values of $\phi$.
By Sard's theorem, almost any $\epsilon>0$ is regular.

For any $\epsilon>0$, we call
\begin{equation}\label{defzef}
  Z_\phi(\epsilon) := \{x \in S\mid |\phi(x)| \leq \epsilon \}
\end{equation}
the \emph{$\epsilon$-nodal set} of $\phi$.
We are only interested in the case where $\epsilon$ is regular.
Then $Z_\phi(\epsilon)$ is a subsurface of $S$ with smooth boundary,
may be empty or may consist of more than one component,
and the boundary components of $Z_\phi(\epsilon)$ are embedded smooth
circles along which $\phi$ is constant $\pm\epsilon$.

Let $\epsilon>0$ be regular.
Delete from $Z_\phi(\epsilon)$ all the components which are contained
in the interior of an embedded closed disc in $S$
and obtain the \emph{derived $\epsilon$-nodal set} $Z_\phi'(\epsilon)$.
By definition, no component of $Z_\phi'(\epsilon)$
is contained in the interior of an embedded closed disc in $S$.
Since $Z_\phi'(\epsilon)$ is important in our discussion,
we view its definition also from a different angle:
If $D\subseteq S$ is an embedded closed disc,
then the components of $Z_\phi(\epsilon)$ contained in the interior of $D$
are compact and bounded by smooth circles.
Each such circle is the boundary of an embedded closed disc $D'$ in $D$,
by the Schoenfliess theorem.
By definition, the boundary circle $\partial D'$ of any such disc $D'$
is also a boundary circle of a component $C$ of $Z_\phi(\epsilon)$.
There are two possible types for $\partial D'$:
Either $C$ is in the outer part or in the inner part of $\partial D'$ with respect to the interior of $D$.
We say that $D'$ is \emph{essential} if $C$ is in the inner part of $\partial D'$.
In other words, $D'$ is essential if a neighborhood of $\partial D'$ in $D'$
is contained in $Z_\phi(\epsilon)$.
Essential discs in $S$ are either disjoint or one is contained in the other;
they are partially ordered by inclusion.
Therefore each essential disc is contained in a unique maximal essential disc.

For any regular $\epsilon>0$,
$Y_\phi(\epsilon):=S\setminus\mathring{Z}_\phi'(\epsilon)$ is a smooth subsurface of $S$.

\begin{lem}\label{lemyef}
For any regular $\epsilon>0$, we have: \\
\begin{inparaenum}[1)]
\item\label{inco}
$Y_\phi(\epsilon)$ is a smooth and incompressible subsurface of $S$. \\
\item\label{sign}
Each component $C$ of $Y_\phi(\epsilon)$
is the union of some component $C_0$ of $\{\phi\ge\epsilon\}$ or of $\{\phi\le-\epsilon\}$
with a finite number ($\ge0$) of maximal essential discs which are attached to $C'$
along $\partial C'$.
In particular, $\phi|_{\partial C}=+\epsilon$ or $\phi|_{\partial C}=-\epsilon$. \\
\item\label{inty}
The function $\phi_\epsilon$ on $S$, defined by
\begin{equation*}
  \phi_\epsilon(x) = \begin{cases}
  \phi(x)-\epsilon \:&\text{if $\phi(x)\ge\epsilon$}, \\
  \phi(x)+\epsilon &\text{if $\phi(x)\le-\epsilon$}, \\
    \phantom{\phi(x)} 0 &\text{otherwise},
 \end{cases}
\end{equation*}
belongs to $H^1(S)$.
Moreover, $\lim_{\epsilon\rightarrow0}\phi_\epsilon=\phi$ in $H^1(S)$.
\end{inparaenum}
\end{lem}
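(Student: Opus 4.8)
The plan is to treat the three assertions in turn, exploiting the local structure of $Z_\phi(\epsilon)$ near its smooth boundary (where $\phi = \pm\epsilon$ and $\nabla\phi \neq 0$, since $\epsilon$ is regular) together with elementary surface topology. For part \ref{inco}, I would first note that deleting from $Z_\phi(\epsilon)$ only those components contained in the interior of an embedded closed disc cannot create any disc components of the complement: a component $D_0$ of $S \setminus Y_\phi(\epsilon) = \mathring{Z}'_\phi(\epsilon)$ that were a disc in $S$ would be a union of components of $\mathring Z_\phi(\epsilon)$ bounded by essential discs, and one checks that the outermost such component, after filling in its essential subdiscs, is itself a component of $Z_\phi(\epsilon)$ lying in the interior of a disc of $S$, hence should have been deleted — contradiction. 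Equivalently, incompressibility means no boundary circle of $Y_\phi(\epsilon)$ bounds a disc in $S$ on its $Y_\phi(\epsilon)$-side; this is exactly the definition of $Z'_\phi(\epsilon)$. Smoothness is inherited from the fact that $Z_\phi(\epsilon)$ has smooth boundary and we only removed entire components together with their filled-in essential discs, whose boundaries are smooth circles.

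For part \ref{sign}, fix a component $C$ of $Y_\phi(\epsilon)$. Its boundary circles lie in $\partial Z_\phi(\epsilon)$, so along each of them $\phi \equiv +\epsilon$ or $\phi \equiv -\epsilon$, and $\phi$ takes values outside $[-\epsilon,\epsilon]$ on the $C$-side. The interior of $C$ meets $\{|\phi| > \epsilon\} = \{\phi > \epsilon\} \sqcup \{\phi < -\epsilon\}$; I would argue that $C$ meets only one of these two open sets — say $\{\phi > \epsilon\}$ — because crossing from one to the other forces passing through $Z_\phi(\epsilon)$, and any component of $Z_\phi(\epsilon)$ that $C$ surrounds has been filled in as a (maximal essential) disc on which $\phi \le \epsilon$ but which is, by essentiality, an inner disc. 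Thus $C$ contains exactly one component $C_0$ of $\{\phi \ge \epsilon\}$ (or of $\{\phi \le -\epsilon\}$), and $C \setminus \mathring C_0$ consists precisely of the maximal essential discs attached along the relevant circles of $\partial C_0$; the sign statement $\phi|_{\partial C} = \pm\epsilon$ follows. (I would write $C'$ for this $C_0$ to match the statement's notation, or simply identify the two.)

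For part \ref{inty}, the function $\phi_\epsilon$ is continuous on $S$ (it vanishes on the boundary circles where $\phi = \pm\epsilon$ and agrees with the continuous functions $\phi \mp \epsilon$ on the closed sets $\{\pm\phi \ge \epsilon\}$), and it is smooth off the smooth hypersurface $\{|\phi| = \epsilon\}$ with $\nabla\phi_\epsilon = \nabla\phi$ on $\{|\phi| > \epsilon\}$ and $\nabla\phi_\epsilon = 0$ on $\{|\phi| < \epsilon\}$; a standard cutoff/regularization argument across the smooth interface shows $\phi_\epsilon \in H^1(S)$ with weak gradient $\mathbf 1_{\{|\phi| > \epsilon\}}\nabla\phi$. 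Then $\phi - \phi_\epsilon$ equals $\pm\epsilon$ on $\{\pm\phi \ge \epsilon\}$ and equals $\phi$ on $\{|\phi| < \epsilon\}$, so $\|\phi - \phi_\epsilon\|_{L^2}^2 \le \epsilon^2 |S| \to 0$, while $\nabla(\phi - \phi_\epsilon) = \mathbf 1_{\{|\phi| < \epsilon\}}\nabla\phi$, whose $L^2$-norm tends to $0$ by dominated convergence since $|\nabla\phi|^2 \in L^1(S)$ and $\mathbf 1_{\{|\phi|<\epsilon\}} \to \mathbf 1_{Z_\phi}$ pointwise with $\nabla\phi = 0$ a.e.\ on $Z_\phi$ by \cref{lemnae}. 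Hence $\phi_\epsilon \to \phi$ in $H^1(S)$.

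The main obstacle I anticipate is the careful bookkeeping in parts \ref{inco} and \ref{sign}: making precise, via the Schoenflies theorem and the notion of essential disc already introduced, that filling in the essential subdiscs of $Z_\phi(\epsilon)$ exactly removes all disc components of the complement without altering incompressibility, and that each resulting component of $Y_\phi(\epsilon)$ carries a single sign. The $H^1$-convergence in part \ref{inty} is routine once \cref{lemnae} is invoked.
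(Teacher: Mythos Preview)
Your treatment of parts \ref{inco} and \ref{inty} is essentially the paper's argument (for \ref{inco} you use the criterion of \cref{corinco} instead of the direct loop argument, which is fine; note only that your ``Equivalently'' sentence has the side reversed --- incompressibility fails when a boundary circle bounds a disc on the \emph{complement} side, i.e.\ inside $Z'_\phi(\epsilon)$, and that is precisely what the construction of $Z'_\phi(\epsilon)$ rules out).

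There is, however, a genuine gap in your plan for part \ref{sign}. Your key claim, that a component $C$ of $Y_\phi(\epsilon)$ meets only one of $\{\phi>\epsilon\}$ and $\{\phi<-\epsilon\}$, is false in general. A maximal essential disc $D'$ is attached to $C_0$ along $\partial D'$, but its \emph{interior} may contain entire components of $\{\phi\ge\epsilon\}$ and of $\{\phi\le-\epsilon\}$ (nested inside further pieces of $Z_\phi(\epsilon)$). So a path in $C$ can cross from $\{\phi>\epsilon\}$ to $\{\phi<-\epsilon\}$ by entering such a disc; the ``filled-in'' components of $Z_\phi(\epsilon)$ are part of $C$, not barriers. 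Consequently, $C$ need not contain \emph{exactly one} component of $\{\phi\ge\epsilon\}$.

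The paper's route avoids this by arguing from the outside in rather than by a connectivity argument inside $C$: one first observes the global decomposition $Y_\phi(\epsilon)=\{|\phi|\ge\epsilon\}\cup(\text{maximal essential discs})$, and then notes that each maximal essential disc $D'$ is adjacent across $\partial D'$ to a \emph{single} component $C_0$ of $\{|\phi|\ge\epsilon\}$ on its outside (since $\phi$ has a fixed sign just outside $\partial D'$). Hence every component of $Y_\phi(\epsilon)$ is one $C_0$ with finitely many maximal essential discs glued along some of its boundary circles; the remaining boundary circles of $C_0$ make up $\partial C$, and on all of them $\phi$ equals the single value $+\epsilon$ or $-\epsilon$ determined by $C_0$. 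Your sketch needs this structural step in place of the sign-separation claim.
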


\begin{proof}
\ref{inco})
Since $Z_\phi'(\epsilon)$ is a union of components of the smooth
subsurface $Z_\phi(\epsilon)$ of $S$, it is a smooth subsurface of $S$.
Hence the complement $Y_\phi(\epsilon)$ of its interior
is also a smooth subsurface of $S$.

It remains to show that there is no loop $c$ in $Y_\phi(\epsilon)$
which is not homotopic to zero in $Y_\phi(\epsilon)$,
but is homotopic to zero in $S$.
We suppose the contrary and assume without loss of generality that $c$ is simple
and contained in the interior of $Y_\phi(\epsilon)$.
Since $c$ is homotopic to zero in $S$, it bounds an embedded closed disc $D$ in $S$,
by \cref{lemdisc}.
Now $D$ is not contained in $Y_\phi(\epsilon)$
since $c$ is not homotopic to zero in $Y_\phi(\epsilon)$.
Hence $D$ contains components of $Z_\phi'(\epsilon)$.
These are in the interior of $D$ since $c$ lies  in the interior of $Y_\phi(\epsilon)$.
But this is in contradiction to the definition of $Z_\phi'(\epsilon)$.

\ref{sign})
For each component of $Z_\phi(\epsilon)$
which is contained in the interior of an embedded closed disc,
choose an essential disc as explained in our discussion of the definition
of $Z_\phi'(\epsilon)$ further up.
Since each essential discs is contained in a unique maximal essential disc,
it follows that $Y_\phi(\epsilon)$ is equal to the (possibly non-disjoint) union
of $S\setminus\mathring{Z}_\phi(\epsilon)=\{|\phi|\ge\epsilon\}$
with maximal essential discs.
Hence each of the components of $Y_\phi(\epsilon)$
consists of some component $C_0$ of $\{\phi\ge\epsilon\}$ or of $\{\phi\le-\epsilon\}$
together with a finite number ($\ge0$) of maximal essential discs
which are attached to $C'$ along $\partial C'$.

\ref{inty})
For all $x\in S$, we have $|\phi_\epsilon(x)|\le|\phi(x)|$.
Hence $\phi_\epsilon$ is in $L^2(M)$.
Moreover, $\phi_\epsilon(x)\rightarrow\phi(x)$ for all $x \in S$,
hence $\lim_{\epsilon\rightarrow0}\phi_\epsilon=\phi$ in $L^2(M)$.
Furthermore, $\phi_\epsilon$ has weak gradient
\begin{equation*}
  \nabla\phi_\epsilon(x) = \begin{cases}
  \nabla\phi(x) \:&\text{if $|\phi(x)|\ge\epsilon$}, \\
  \phantom{\nabla\phi} 0 &\text{otherwise}.
 \end{cases}
\end{equation*}
It follows that $\phi_\epsilon$ is in $H^1(S)$.
Furthermore,
$\lim_{\epsilon\rightarrow0}\nabla\phi_\epsilon=\nabla\phi$ in $H^1(S)$,
by \cref{lemnae}.
\end{proof}

We let $Y_\phi'(\epsilon)$ be the union of the components $C$
of $Y_\phi(\epsilon)$ with Euler characteristic $\chi(C)<0$.
That is, $Y_\phi'(\epsilon)$ is the union of the components of $Y_\phi(\epsilon)$
which are not diffeomorphic to a disc, an annulus, or a cross cap.

\begin{lem}\label{lemeul}
For all sufficiently small regular $\epsilon>0$, we have $\chi(Y_\phi'(\epsilon))<0$.
In other words,
$Y_\phi'(\epsilon)$ is non-empty for all sufficiently small $\epsilon>0$.
\end{lem}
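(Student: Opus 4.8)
The plan is a proof by contradiction. The case of constant $\phi$ is trivial: then $Z_\phi(\epsilon)=\emptyset$ for all small $\epsilon>0$, so $Y_\phi(\epsilon)=Y_\phi'(\epsilon)=S$ and $\chi(Y_\phi'(\epsilon))=\chi(S)<0$. So I would assume $\phi$ non-constant and, for contradiction, that there is a sequence of $\phi$-regular $\epsilon_n\to0$ with $Y_\phi'(\epsilon_n)=\emptyset$, i.e.\ with every component of $Y_\phi(\epsilon_n)$ a disc, an annulus, or a cross cap. The first step is a Rayleigh quotient estimate. On a component $C$ of $Y_\phi(\epsilon_n)$ the function $\phi_{\epsilon_n}$ vanishes on $\partial C$ by \cref{lemyef}, so $\phi_{\epsilon_n}|_C\in H^1_0(C)$; since $C$ is itself an embedded disc, annulus, or cross cap in $S$ with smooth boundary, $\lambda_0(C)\ge\Lambda(S)$ by \eqref{dam} and \eqref{geomc}, and hence $\int_C\|\nabla\phi_{\epsilon_n}\|^2\ge\Lambda(S)\int_C\phi_{\epsilon_n}^2$. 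Since $\supp\phi_{\epsilon_n}\subseteq Y_\phi(\epsilon_n)$, summing over the components gives $\mathcal R(\phi_{\epsilon_n})\ge\Lambda(S)$. Letting $n\to\infty$ and using $\phi_{\epsilon_n}\to\phi$ in $H^1(S)$ (again \cref{lemyef}) and $\|\phi\|_{L^2}=1$ yields $\mathcal R(\phi)\ge\Lambda(S)$; but $\mathcal R(\phi)\le\Lambda(S)$ because $\phi\in\mathbb E$, so $\mathcal R(\phi)=\Lambda(S)$. As $\phi$ is a finite combination of eigenfunctions with eigenvalues $\le\Lambda(S)$ and $\mathcal R(\phi)$ is the corresponding weighted average of those eigenvalues, this forces $\phi\in\mathbb E_{\Lambda(S)}$, i.e.\ $\Delta\phi=\Lambda(S)\,\phi$. (If $\mathbb E_{\Lambda(S)}=\{0\}$ we are already done.)

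The second step refutes the existence of such an eigenfunction. For a nodal domain $N$ of $\phi$, say with $\phi>0$ on $N$, the restriction $\phi|_N$ is a positive $H^1_0$-eigenfunction of the Dirichlet Laplacian on $N$, hence its ground state, so $\lambda_0(N)=\Lambda(S)$ (Barta's inequality); moreover $\bar N$ is a compact subsurface of $S$ with piecewise smooth boundary, the frontier of $N$ being a finite union of smooth nodal arcs with corners at the finitely many singular points of $Z_\phi$. I claim $\chi(N)<0$. Otherwise $\bar N$ is an embedded closed disc, annulus, or cross cap; since $\chi(S)<0$ the complement $S\setminus\bar N$ is non-empty, so $\bar N$ can be enlarged to an embedded disc, annulus, or cross cap $\Omega\subsetneq S$ of the same type with $\Omega\supsetneq\bar N$ by pushing one of its boundary circles outward by a thin collar into $S\setminus\bar N$, and then \eqref{dineq} and \eqref{dam} give the contradiction $\Lambda(S)\le\lambda_0(\Omega)<\lambda_0(\bar N)=\lambda_0(N)=\Lambda(S)$. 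Thus every nodal domain of $\phi$ has negative Euler characteristic, and there are at least two of them since $\int_S\phi=0$.

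The third step is an Euler characteristic count for $\epsilon=\epsilon_n$ small. Because $\phi$ is an eigenfunction, $Z_\phi$ is a finite graph whose vertices all have valence $\ge4$, so each of its components has Euler characteristic $\le0$; and $\phi$ has only finitely many critical points, so for small $\epsilon$ the set $\{|\phi|\le\epsilon\}$ is a regular neighborhood of $Z_\phi$ and $\chi(\{|\phi|\ge\epsilon\}\cap N)=\chi(N)$ for every nodal domain $N$. Let $W_1,\dots,W_m$ be the components of $\{|\phi|\le\epsilon\}$ that lie in an embedded disc of $S$; these are exactly the components removed in passing from $Z_\phi(\epsilon)$ to $Z_\phi'(\epsilon)$. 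Using $\chi(S)=\chi(Y_\phi(\epsilon))+\chi(Z_\phi'(\epsilon))$ and the decompositions $\{|\phi|\le\epsilon\}=Z_\phi'(\epsilon)\sqcup W_1\sqcup\dots\sqcup W_m$ and $\{|\phi|\ge\epsilon\}=\bigsqcup_N(\{|\phi|\ge\epsilon\}\cap N)$, one obtains $\chi(Y_\phi(\epsilon))=\sum_N\chi(N)+\sum_{j=1}^m\chi(W_j)$, the first sum running over the nodal domains of $\phi$. By the second step the first sum is $\le-2$, and each $W_j$ retracts onto a component of $Z_\phi$ so the second sum is $\le0$; hence $\chi(Y_\phi(\epsilon))<0$. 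But under our assumption every component of $Y_\phi(\epsilon)$ is a disc, an annulus, or a cross cap, so $\chi(Y_\phi(\epsilon))\ge0$ — a contradiction, which proves the lemma.

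The main obstacle is the reduction in the first two steps: the analytic estimate of the first step produces only the \emph{equality} $\mathcal R(\phi)=\Lambda(S)$ rather than a strict inequality, so it cannot by itself exclude such a sequence $\epsilon_n\to0$, and the decisive extra input is topological — namely that for $\chi(S)<0$ no embedded disc, annulus, or cross cap realizes the infimum defining $\Lambda(S)$ — which must then be played off against the structure of the nodal and critical sets of the eigenfunction $\phi$. I expect the care to go into identifying $\bar N$ as a surface with piecewise smooth boundary (so that \eqref{dineq} applies to nodal domains), into the structure statements about $Z_\phi$ for eigenfunctions, and into the bookkeeping of the Euler characteristic identity in the third step, in particular the role of the essential discs produced by \cref{lemyef}.
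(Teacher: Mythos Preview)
Your proof is correct and follows essentially the same strategy as the paper: the Rayleigh-quotient estimate (your Step~1, the paper's Case~1) reduces to the eigenfunction case $\mathcal R(\phi)=\Lambda(S)$, and there the Cheng--Bers local structure of $Z_\phi$ together with the strict monotonicity \eqref{dineq} forces every nodal domain to have negative Euler characteristic. The only noteworthy differences are cosmetic: in the endgame the paper argues directly that $Z_\phi$ contains no homotopically trivial loops, hence $Z_\phi'(\epsilon)=Z_\phi(\epsilon)$ and $Y_\phi'(\epsilon)=Y_\phi(\epsilon)$ for small $\epsilon$, whereas you keep the deleted pieces $W_j$ and bound $\chi(W_j)\le0$ via the valence-$\ge4$ structure of $Z_\phi$; and you assert that $\phi$ has only finitely many critical points, which is stronger than what is established or needed---the paper (and your argument) only requires that for small $\epsilon$ there are no critical points in $\{0<|\phi|\le\epsilon\}$, which follows from the local normal form.
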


\begin{proof}
{\sc Case 1:}
Assume first that the Rayleigh quotient $R(\phi)<\Lambda(S)$
and choose a $\delta > 0$ such that
\begin{equation*}
  R(\phi) \le \Lambda(S) - 2\delta.
\end{equation*}
By \cref{lemyef}.\ref{inty}, we have, for any sufficiently small regular $\epsilon>0$,
\begin{equation*}
   \frac{\sum_C\int_C|\nabla\phi_\epsilon|^2}{\sum_C\int_C\phi_\epsilon^2}
   \le \frac{\int_S|\nabla\phi|^2 dv}{\int_S\phi^2 dv} + \delta
   = R(\phi) + \delta
   \le \Lambda(S) - \delta,
\end{equation*}
where the sums run over the components $C$ of $Y_\phi(\epsilon)$.
We conclude that there is a component $C$ such that
\begin{equation*}
   R(\phi_\epsilon|_C)
   = \frac{\int_C|\nabla\phi_\epsilon|^2}{\int_C\phi_\epsilon^2}
   \leq \Lambda(S) - \delta.
\end{equation*}
Since $\phi_\epsilon$ vanishes along $\partial C$,
it follows from the definition of $\Lambda(S)$ that $C$ is neither a disc, nor an annulus,
nor a cross cap.
Hence the Euler characteristic of $C$ is negative.

{\sc Case 2:}
Assume now that $\mathcal{R}(\phi) = \Lambda(S)$.
This is the only part of the proof which requires the regularity theory
of the nodal sets of eigenfunctions,
and it is needed to establish that the inequality in \cref{theo} is strict.

Since $\mathbb E$ is the sum of the eigenspaces of $\Delta$
with eigenvalues $\le\Lambda(S)$,
the equality $\mathcal{R}(\phi) = \Lambda(S)$ implies
that $\phi$ is an eigenfunction of $\Delta$ with eigenvalue $\Lambda(S)$.
Now it is a classical result that non-zero eigenfunctions of the Laplacian
cannot vanish of infinite order at any point; see e.g. \cite{Ar}.
Therefore, by the main result of \cite{Be},
at any critical point $z\in Z_\phi$ of $\phi$,
there are Riemannian normal coordinates $(x,y)$ about $z$,
a spherical harmonic $p=p(x,y)\ne0$ of some order $n\ge2$,
and a constant $\alpha\in(0,1)$ such that
\begin{equation*}
   \phi(x,y) = p(x,y) + O(r^{n+\alpha}),
\end{equation*}
where we write $(x,y)=(r\cos\theta,r\sin\theta)$.
By Lemma 2.4 of \cite{Che},
there is a local $C^1$-diffeomorphism $\Phi$ about $0\in\R^2$ fixing $0$ such that
\begin{equation*}
  \phi=p\circ \Phi.
\end{equation*}
Note that, up to a rotation of the $(x,y)$-plane, we have
\begin{equation*}
  p=p(x,y)=cr^n\cos n\theta
\end{equation*}
for some constant $c\ne0$.
It follows that the nodal set $Z_\phi$ of $\phi$ is a finite graph
with critical points of $\phi$ as vertices (\cite[Theorem 2.5]{Che}).
It also follows that, for any sufficiently small $\epsilon>0$,
the only critical points of $\phi$ in $\{|\phi|\le\epsilon\}$ are already contained in $Z_\phi$.
In particular, the gradient flow of $\phi$ can be used to obtain a deformation retraction
of $S\setminus Z_\phi$ onto $\{|\phi|\ge\epsilon\}$.

For any component $C$ of $S\setminus Z_\phi$,
the restriction of $\phi$ to $C$ vanishes nowhere on $C$,
and hence $\phi$ is the eigenfunction for the first Dirichlet eigenvalue of $C$.
It follows that $\lambda_0(C)=\Lambda(S)$.

Since $\phi$ is perpendicular to the constant functions,
the interior of the complement of a component $C$ as above is non-empty.
Hence $C$ can be strictly enlarged within $S$,
keeping the topological type of $C$, while strictly decreasing $\lambda_0(C)$;
see \eqref{dineq}.
It follows that no component $C$ of $S\setminus Z_\phi$ is diffeomorphic
to a disc or an annulus or a cross cap (with piecewise smooth boundary).
Thus each component of $S\setminus Z_\phi$ has negative Euler characteristic.

It follows also that the graph $Z_\phi$ does not contain non-trivial loops
which are homotopic to zero in $S$ since otherwise $S\setminus Z_\phi$
would contain a component which is a disc.
Hence, for all sufficiently small regular $\epsilon>0$,
no component of $Z_\epsilon(\phi)$ is contained in a disc
and each component of $Y_\phi(\epsilon)$ has negative Euler characteristic.
Thus $Y_\phi'(\epsilon)=Y_\phi(\epsilon)$, for all sufficiently small $\epsilon>0$.
\end{proof}

\begin{lem}\label{lemeul2}
For all regular $\epsilon>0$, we have $\chi(S)\le\chi(Y_\phi'(\epsilon))$.
\end{lem}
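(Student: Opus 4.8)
The plan is to compare $\chi(S)$ with $\chi(Y_\phi'(\epsilon))$ through the complementary subsurface. Put
\[
  \hat Z := S\setminus\mathring{Y}_\phi'(\epsilon),
\]
which is a smooth subsurface of $S$ since $\epsilon$ is regular and $Y_\phi(\epsilon)$ (hence $Y_\phi'(\epsilon)$, a union of components of it) is smooth by \cref{lemyef}.\ref{inco}. Then $S$ is obtained by gluing the compact surfaces $Y_\phi'(\epsilon)$ and $\hat Z$ along the circles constituting $\partial Y_\phi'(\epsilon)=\partial\hat Z$, and since the Euler characteristic is additive under such gluings (a $1$-manifold has vanishing Euler characteristic), we get
\[
  \chi(S)=\chi(Y_\phi'(\epsilon))+\chi(\hat Z).
\]
So it suffices to show $\chi(\hat Z)\le0$, and for that it is enough to show $\chi(C)\le0$ for every component $C$ of $\hat Z$.

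At this point elementary surface topology does the bulk of the work. A compact surface with non-empty boundary has Euler characteristic at most $1$, with equality exactly for the disc; and a component of $\hat Z$ with empty boundary is open and closed in $S$, hence equal to $S$, which has $\chi(S)<0$ by hypothesis. Therefore $\chi(C)\le0$ for every component $C$ of $\hat Z$ unless some component is an embedded closed disc in $S$, and the whole point is to rule this out.

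So suppose $C$ is a disc component of $\hat Z$, and let $\gamma:=\partial C$, a single embedded circle belonging to $\partial Y_\phi'(\epsilon)$. On the side of $\gamma$ opposite to $C$ lies (locally) $\mathring{Y}_\phi'(\epsilon)$, so on the $C$-side of $\gamma$ we must have $\mathring{Z}_\phi'(\epsilon)$ locally: indeed $Y_\phi'(\epsilon)\subseteq Y_\phi(\epsilon)$ and $\partial Y_\phi(\epsilon)=\partial Z_\phi'(\epsilon)$, so the only alternative would be that the $C$-side of $\gamma$ sits in a disc, annulus or cross cap component of $Y_\phi(\epsilon)$, forcing $\mathring{Z}_\phi'(\epsilon)$ to appear on the non-$C$-side as well, contradicting $\mathring{Y}_\phi'(\epsilon)\cap\mathring{Z}_\phi'(\epsilon)=\emptyset$. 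Hence $\gamma$ is a boundary circle of a component $C_0$ of $Z_\phi'(\epsilon)$ lying on the $C$-side of $\gamma$, so $C_0\subseteq C$ and $C_0\cap\partial C=\gamma$; in particular every boundary circle of $C_0$ other than $\gamma$ lies in $\mathring{C}$. Now enlarge $C$ by attaching a collar of $\gamma$ on the side away from $C$, which is possible because that side is $\mathring{Y}_\phi'(\epsilon)$; the result is an embedded closed disc $D$ in $S$ with $C\subseteq D$ and $\gamma\subseteq\mathring{D}$, hence $C_0\subseteq\mathring{D}$. But $C_0$ is then a component of $Z_\phi(\epsilon)$ contained in the interior of an embedded closed disc in $S$, so it would have been deleted when passing from $Z_\phi(\epsilon)$ to $Z_\phi'(\epsilon)$ --- a contradiction. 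Thus no component of $\hat Z$ is a disc, $\chi(\hat Z)\le0$, and the asserted inequality $\chi(S)\le\chi(Y_\phi'(\epsilon))$ follows.

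The step I expect to require the most care, and the only genuinely delicate one, is the bookkeeping of the two sides of the circle $\gamma$: one has to be sure that the component $C_0$ of $Z_\phi'(\epsilon)$ incident to $\gamma$ really lies inside the disc component $C$ of $\hat Z$ rather than on the far side where $Y_\phi'(\epsilon)$ sits, and that the collar can be attached within $\mathring{Y}_\phi'(\epsilon)$ so as to keep $D$ embedded. Both follow from the definitions $\hat Z=S\setminus\mathring{Y}_\phi'(\epsilon)$ and $Y_\phi(\epsilon)=S\setminus\mathring{Z}_\phi'(\epsilon)$ (which gives $\partial Y_\phi(\epsilon)=\partial Z_\phi'(\epsilon)$), but they are the sort of assertion that is easy to get backwards. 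Finally, the degenerate cases are immediate: if $Y_\phi'(\epsilon)=\emptyset$ then $\hat Z=S$ and $\chi(\hat Z)=\chi(S)<0$, and if $Y_\phi'(\epsilon)=S$ then $\hat Z=\emptyset$ and the inequality is trivial.
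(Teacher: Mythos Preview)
Your proof is correct and follows the same overall structure as the paper's: write $\chi(S)=\chi(Y_\phi'(\epsilon))+\chi(S\setminus\mathring Y_\phi'(\epsilon))$ and rule out disc components of the complement. The only difference is that the paper dispatches the no-disc step in one line by citing the incompressibility of $Y_\phi(\epsilon)$ from \cref{lemyef}.\ref{inco} (a boundary circle of a negative-Euler-characteristic component of $Y_\phi'(\epsilon)$ cannot bound a disc in $S$), whereas you unpack that argument by tracing back to the definition of $Z_\phi'(\epsilon)$---which is precisely how \cref{lemyef}.\ref{inco} itself is proved; your ``only alternative'' aside about disc/annulus/cross-cap components of $Y_\phi(\epsilon)$ is unnecessary since $\partial Y_\phi'(\epsilon)\subseteq\partial Y_\phi(\epsilon)$ already forces $\mathring Z_\phi'(\epsilon)$ on the $C$-side.
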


\begin{proof}
By definition, $Y_\phi'(\epsilon)$ and $S\setminus\mathring Y_\phi'(\epsilon)$
are smooth subsurfaces of $S$ which intersect along their common boundary,
a finite number of circles.
Hence
\begin{equation*}
    \chi(S) = \chi(Y_\phi'(\epsilon))
    + \chi(S\setminus\mathring Y_\phi'(\epsilon)),
\end{equation*}
by the Mayer-Vietoris sequence.
No component of $S\setminus\mathring Y_\phi'(\epsilon)$ is a disc
since otherwise the boundary of the disc would be a loop in $Y_\phi'(\epsilon)$
which is not homotopic to zero in $Y_\phi'(\epsilon)$,
but homotopic to zero in $S$.
This would be in contradiction to \cref{lemyef}.\ref{inco}.
It follows that $\chi(S\setminus\mathring Y_\phi'(\epsilon))\le0$.
\end{proof}

For later purposes, we want to attach signs to the components $C$ of $Y_\phi'(\epsilon)$:
We say that $C$ is \emph{positive} or \emph{negative} if $C$ is the union of
maximal essential discs with a component of $\{\phi\ge\epsilon\}$
or a component of $\{\phi\le-\epsilon\}$, respectively.
We denote by ${Y_\phi'}^+(\epsilon)$ and ${Y_\phi'}^-(\epsilon)$
the subsets of positive and negative components of $Y_\phi'(\epsilon)$, respectively.

\begin{lem}\label{lemeul4}
Let $\epsilon_1>\epsilon_2>0$ be regular.
Then
\begin{equation*}
  Y_\phi'(\epsilon_1)\subseteq Y_\phi'(\epsilon_2)
  \quad\text{and}\quad
  \chi(Y_\phi'(\epsilon_2))\le\chi(Y_\phi'(\epsilon_1)).
\end{equation*}
Moreover,
if $\chi(Y_\phi'(\epsilon_2))=\chi(Y_\phi'(\epsilon_1))$,
then $Y_\phi'(\epsilon_2)$ arises from $Y_\phi'(\epsilon_1)$
by attaching annuli and cross caps
along boundary curves of $Y_\phi'(\epsilon_1)$.
The analogous statements hold for ${Y_\phi'}^\pm(\epsilon_1)$
and ${Y_\phi'}^\pm(\epsilon_2)$ in place of $Y_\phi'(\epsilon_1)$ and $Y_\phi'(\epsilon_1)$, respectively.
\end{lem}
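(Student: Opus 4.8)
The plan is to follow the pattern of the proof of \cref{lemeul2}: first I would establish the inclusion $Y_\phi'(\epsilon_1)\subseteq Y_\phi'(\epsilon_2)$, and then read off the Euler characteristic statements from the Mayer--Vietoris sequence. The recurring point is that $\partial Y_\phi(\epsilon)$ lies on the level set $\{|\phi|=\epsilon\}$, so the boundaries occurring at the two levels $\epsilon_1$ and $\epsilon_2$ are disjoint. First I would check the nesting of the derived nodal sets: since $\epsilon_2<\epsilon_1$ we have $Z_\phi(\epsilon_2)\subseteq Z_\phi(\epsilon_1)$; each component of $Z_\phi(\epsilon_2)$ lies in a unique component of $Z_\phi(\epsilon_1)$, and if the former is not contained in the interior of an embedded disc then neither is the latter, so the latter survives into $Z_\phi'(\epsilon_1)$. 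Hence $Z_\phi'(\epsilon_2)\subseteq Z_\phi'(\epsilon_1)$, so $\mathring{Z}_\phi'(\epsilon_2)\subseteq\mathring{Z}_\phi'(\epsilon_1)$ and therefore $Y_\phi(\epsilon_1)\subseteq Y_\phi(\epsilon_2)$. Every component $C$ of $Y_\phi(\epsilon_1)$ then lies in a unique component $C^+$ of $Y_\phi(\epsilon_2)$, with $\partial C\subseteq\mathring{C^+}$ since $\partial C$ and $\partial C^+$ sit on different level sets of $|\phi|$.

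The heart of the matter is to show that $\chi(C)<0$ implies $\chi(C^+)<0$, which yields $Y_\phi'(\epsilon_1)\subseteq Y_\phi'(\epsilon_2)$. Writing $C^+=C\cup(C^+\setminus\mathring C)$ and applying Mayer--Vietoris along the circles $\partial C$ (as in the proof of \cref{lemeul2}) gives $\chi(C^+)=\chi(C)+\chi(C^+\setminus\mathring C)$, so it suffices to show that $C^+\setminus\mathring C$ has no disc component $D$. The boundary circle $\gamma$ of such a $D$, being connected, lies either on $\partial C$ or on $\partial C^+$. If $\gamma\subseteq\partial C$, then $C$ is not a disc (as $\chi(C)<0$), so $\gamma$ is not homotopic to zero in $C$, hence not homotopic to zero in $S$ by incompressibility of $Y_\phi(\epsilon_1)$ (\cref{lemyef}.\ref{inco}); this contradicts $\gamma=\partial D$. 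If $\gamma\subseteq\partial C^+$, then a neighbourhood of $\gamma$ in $C^+$ lies in $D$, so $D$ is open and closed in the connected surface $C^+$, whence $C^+=D$; but then $C\subseteq D$ would lie in a disc in $S$, and a boundary circle of $C$, which is not homotopic to zero in $C$, would bound a disc in $S$, again contradicting incompressibility. So $\chi(C^+\setminus\mathring C)\le0$ and $\chi(C^+)\le\chi(C)<0$, as desired.

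Granting this, I would set $R:=Y_\phi'(\epsilon_2)\setminus\mathring Y_\phi'(\epsilon_1)$, which is glued to $Y_\phi'(\epsilon_1)$ along $\partial Y_\phi'(\epsilon_1)\subseteq\mathring Y_\phi'(\epsilon_2)$, so that $\chi(Y_\phi'(\epsilon_2))=\chi(Y_\phi'(\epsilon_1))+\chi(R)$ by Mayer--Vietoris. A disc component of $R$ would have its boundary circle on $\partial Y_\phi'(\epsilon_1)$ or on $\partial Y_\phi'(\epsilon_2)$, hence on the boundary of a component of $Y_\phi'(\epsilon_1)$ or of $Y_\phi'(\epsilon_2)$; such a component has negative Euler characteristic, hence is not a disc, and is incompressible in $S$ (\cref{lemyef}.\ref{inco}), so this circle is not homotopic to zero in it, and therefore not in $S$, contradicting that it bounds a disc. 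Therefore $\chi(R)\le0$ and $\chi(Y_\phi'(\epsilon_2))\le\chi(Y_\phi'(\epsilon_1))$. If equality holds, then $\chi(R)=0$; since no component of $R$ is a disc and, $S$ being connected with $\chi(S)<0$, none is closed, every component of $R$ has Euler characteristic $0$ with nonempty boundary, hence is an annulus or a cross cap. Each such component meets $\partial Y_\phi'(\epsilon_1)$, for otherwise its boundary would lie entirely on $\partial Y_\phi'(\epsilon_2)$ and it would be a component of $Y_\phi'(\epsilon_2)$ of Euler characteristic $0$, which is impossible. Hence $Y_\phi'(\epsilon_2)$ is obtained from $Y_\phi'(\epsilon_1)$ by attaching annuli and cross caps along boundary curves of $Y_\phi'(\epsilon_1)$.

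For the signed versions I would check that the sign is preserved under $C\mapsto C^+$: a positive $C$ contains a component of $\{\phi\ge\epsilon_1\}\subseteq\{\phi\ge\epsilon_2\}$, so $\phi>\epsilon_2$ at some point of $C^+$, whereas every negative component of $Y_\phi(\epsilon_2)$ satisfies $\phi\le\epsilon_2$ throughout (it is a component of $\{\phi\le-\epsilon_2\}$ together with essential discs, on which $|\phi|\le\epsilon_2$); hence $C^+$ is positive, and likewise negative components map to negative ones. Thus ${Y_\phi'}^\pm(\epsilon_1)\subseteq{Y_\phi'}^\pm(\epsilon_2)$, and repeating the Mayer--Vietoris argument verbatim with ${Y_\phi'}^\pm$ in place of $Y_\phi'$ gives the remaining assertions. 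I expect the main obstacle to be the implication $\chi(C)<0\Rightarrow\chi(C^+)<0$, and within it the case where the offending disc $D$ abuts the outer boundary $\partial C^+$: there one cannot yet use that $C^+$ is not a disc, and must instead argue that a boundary circle of $C^+$ bounding a disc inside $C^+$ forces $C^+$ to equal that disc, and then rule out an incompressible surface of negative Euler characteristic inside a disc.
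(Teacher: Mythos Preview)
Your argument follows the same strategy as the paper's (nesting of the derived nodal sets, incompressibility to exclude disc components, Mayer--Vietoris for the Euler characteristic), and the unsigned part is correct.

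There is, however, a genuine gap in your treatment of the signed version. You assert that a negative component of $Y_\phi(\epsilon_2)$ satisfies $\phi\le\epsilon_2$ throughout, justifying this by saying that on the attached essential discs one has $|\phi|\le\epsilon_2$. That last claim is false: an essential disc $D'$ only has a \emph{neighbourhood of its boundary} inside $Z_\phi(\epsilon_2)$; deeper in the interior of $D'$ the function $\phi$ may well exceed $\epsilon_2$ (or even $\epsilon_1$). So the dichotomy ``$\phi>\epsilon_2$ somewhere on $C^+$'' versus ``$\phi\le\epsilon_2$ everywhere on a negative $C^+$'' does not hold, and your argument does not conclude.

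The fix is exactly what the paper does, and it uses the same incompressibility idea you already exploited. If $C$ is a positive component of $Y_\phi'(\epsilon_1)$ with $\chi(C)<0$, then $\phi=\epsilon_1$ on $\partial C$, while the core $C_0'$ of a hypothetically negative $C^+$ lies in $\{\phi\le-\epsilon_2\}$; hence $\partial C\cap C_0'=\emptyset$, so every boundary circle of $C$ lies in one of the maximal essential discs of $C^+$. But such a circle is not null-homotopic in $C$ (since $\chi(C)<0$) and $C$ is incompressible in $S$, so it cannot lie in a disc. This contradiction shows $\partial C$ meets $C_0'$, forcing $C_0'\subseteq\{\phi\ge\epsilon_2\}$ and hence $C^+$ positive. (The case $\partial C=\emptyset$, i.e.\ $C=S$, is handled separately.) With this correction your proof goes through and matches the paper's.
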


\begin{proof}
By definition, $Z_\phi(\epsilon_2)\subseteq Z_\phi(\epsilon_1)$.
If a component of $Z_\phi(\epsilon_1)$
is contained in the interior of an embedded closed disc,
then also all the components of $Z_\phi(\epsilon_2)$ it contains.
It follows that $Z_\phi'(\epsilon_2)\subseteq Z_\phi'(\epsilon_1)$
and hence that $Y_\phi(\epsilon_1)\subseteq Y_\phi(\epsilon_2)$.

Let $C_1$ be a component of $Y_\phi'(\epsilon_1)$
and $C$ be the component of $Y_\phi(\epsilon_2)$ which contains it.
Let $B$ be the union of the components of $Y_\phi'(\epsilon_1)$
which are contained in $C$.
Since $\epsilon_1\ne\epsilon_2$, the boundaries of $B$ and $C$ are disjoint,
by \cref{lemyef}.\ref{sign}.
Since the Euler characteristics of the components of $B$ are negative,
boundary curves of $B$ are not homotopic to zero in $B$.

Assume that $\chi(C)>\chi(B)$.
Then one of the components of $C\setminus\mathring B$ is a (closed) disc.
Then a boundary curve of $B$ would be homotopic to zero in $S$
in contradiction to the incompressibilty of $B$; see \cref{lemyef}.\ref{inco}.
We conclude that $\chi(C)\le\chi(B)$.
Since $\chi(B)<0$, we also conclude that $C\subseteq Y_\phi'(\epsilon_2)$.
Therefore $Y_\phi'(\epsilon_1)\subseteq Y_\phi'(\epsilon_2)$
and $\chi(Y_\phi'(\epsilon_2))\le\chi(Y_\phi'(\epsilon_1))$.
Equality implies that the differences $C\setminus\mathring B$ as above
consists of annuli and cross caps.

By what we just said, the last assertion follows
if ${Y_\phi'}^\pm(\epsilon_1)\subseteq{Y_\phi'}^\pm(\epsilon_2)$.
To show this, let $C_1$ be a positive component of $Y_\phi'(\epsilon_1)$
and $C$ be the component of $Y_\phi'(\epsilon_2)$ containing it.
Assume first that $C_1\ne S$, that is, that $\partial C_1\ne\emptyset$.
Now $C$ is the union of a number of maximal essential discs (with respect to $\epsilon_2$)
with a component $C_0$ of $\{\phi\ge\epsilon_2\}$ or $\{\phi\le-\epsilon_2\}$.
Since $C_1$ is incompressible in $S$ and the boundary curves of $C_1$
are not homotopic to zero in $C_1$,
$\partial C_1$ is not contained in any of the maximal discs.
Therefore $\partial C_1$ intersects $C_0$ non-trivially.
Since $\phi|_{\partial C_1}=\epsilon_1$,
we conclude that $C_0$ is a component of $\{\phi\ge\epsilon_2\}$.
Hence $C$ is positive
and therefore $Y_\phi^+(\epsilon_1)\subseteq Y_\phi^+(\epsilon_2)$.

The case $C_1=S$ follows from the Schoenfliess theorem.
The proof of the inclusion $Y_\phi^-(\epsilon_1)\subseteq Y_\phi^-(\epsilon_2)$
is similar.
\end{proof}

We want fo modify the subsurfaces ${Y_\phi'}^\pm(\epsilon)$
so that their isotopy type in $S$ becomes independent of $\epsilon$ as $\epsilon\to0$:
For any regular $\epsilon>0$,
we let $X_\phi^+(\epsilon)$ be the union of ${Y_\phi'}^+(\epsilon)$
with the components of the complement of the interior of ${Y_\phi'}^+(\epsilon)$ in $S$
which are annuli and cross caps.
Note that $\phi=\epsilon$ on the boundary of such annuli and cross caps.
We define $X_\phi^-(\epsilon)$ accordingly
and set $X_\phi(\epsilon)=X_\phi^+(\epsilon)\cup X_\phi^-(\epsilon)$.
Note that
\begin{equation}\label{yx}
  \chi(X_\phi^\pm(\epsilon)) = \chi({Y_\phi'}^\pm(\epsilon))
  \quad\text{and}\quad
  \chi(X_\phi(\epsilon)) = \chi({Y_\phi'}(\epsilon)).
\end{equation}
By construction and \cref{lemyef}.\ref{sign},
$\phi|_{\partial C}=\pm\epsilon$ for any component $C$ of $X_\phi^\pm(\epsilon)$.
Observe that $X_{-\phi}^+(\epsilon)=X_\phi^-(\epsilon)$,
and accordingly for ${Y_\phi'}^{\pm}(\epsilon)$.

\begin{lem}\label{lemeul6}
Let $\epsilon_1>\epsilon_2>0$ be regular
and suppose that $\chi(X_\phi(\epsilon_1))=\chi(X_\phi(\epsilon_2))$.
Then $(S,X^+_\phi(\epsilon_1),X^-_\phi(\epsilon_1))$
is isotopic to $(S,X^+_\phi(\epsilon_2),X^-_\phi(\epsilon_2))$;
that is, there is a diffeomorphism of $S$ which is isotopic to the identity and which
restricts to a diffeomorphism between $X_\phi^+(\epsilon_1)$ and $X_\phi^+(\epsilon_2)$
respectively between $X_\phi^-(\epsilon_1)$ and $X_\phi^-(\epsilon_2)$.
\end{lem}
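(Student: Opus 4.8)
The plan is to deduce from the hypothesis that, for each sign, $X_\phi^{\pm}(\epsilon_2)$ is obtained from $X_\phi^{\pm}(\epsilon_1)$ by a collar enlargement, and then to realize these enlargements by an ambient isotopy of $S$. First I would pass to the two signs separately. By \eqref{yx} the hypothesis reads $\chi({Y_\phi'}(\epsilon_1))=\chi({Y_\phi'}(\epsilon_2))$, that is, $\chi({Y_\phi'}^{+}(\epsilon_1))+\chi({Y_\phi'}^{-}(\epsilon_1))=\chi({Y_\phi'}^{+}(\epsilon_2))+\chi({Y_\phi'}^{-}(\epsilon_2))$. By \cref{lemeul4} we have ${Y_\phi'}^{\pm}(\epsilon_1)\subseteq{Y_\phi'}^{\pm}(\epsilon_2)$ and $\chi({Y_\phi'}^{\pm}(\epsilon_2))\le\chi({Y_\phi'}^{\pm}(\epsilon_1))$, so both of the latter two inequalities must be equalities; hence, again by \cref{lemeul4}, ${Y_\phi'}^{\pm}(\epsilon_2)$ arises from ${Y_\phi'}^{\pm}(\epsilon_1)$ by attaching annuli and cross caps along boundary curves. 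In the notation of the proof of \cref{lemeul4}, these attached pieces are, for each component $C$ of $Y_\phi(\epsilon_2)$ with $\phi|_{\partial C}=\pm\epsilon_2$, the components $T$ of $C\setminus\mathring B$, where $B$ is the union of the components of ${Y_\phi'}^{\pm}(\epsilon_1)$ contained in $C$; each such $T$ is an annulus or a cross cap, $\phi$ is constant equal to $\pm\epsilon_1$ on $\partial B$, and $\partial B\subseteq\partial{Y_\phi'}^{\pm}(\epsilon_1)$.

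The heart of the matter is then the claim that $X_\phi^{\pm}(\epsilon_1)\subseteq X_\phi^{\pm}(\epsilon_2)$ and that every component of $X_\phi^{\pm}(\epsilon_2)\setminus\mathring X_\phi^{\pm}(\epsilon_1)$ is an annulus meeting $\partial X_\phi^{\pm}(\epsilon_1)$ in exactly one of its two boundary circles. To prove this I would classify the attached pieces $T$ according to their boundary. Since $\phi\equiv\pm\epsilon_1$ on $\partial B$ and $\phi\equiv\pm\epsilon_2$ on $\partial C$, the curve systems $\partial B$ and $\partial C$ are disjoint, so each boundary circle of $T$ lies entirely on $\partial B$ or entirely on $\partial C$, and each $T$ meets $\partial B$. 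If $T$ is a cross cap, or an annulus with both boundary circles on $\partial B$, then $\partial T\subseteq\partial{Y_\phi'}^{\pm}(\epsilon_1)$, and a short point-set argument --- the manifold interior $\mathring T$ of $T$ is open in $S$, and near any point of $\partial T$ the complement of the interior of ${Y_\phi'}^{\pm}(\epsilon_1)$ coincides locally with $T$, so $T$ is open and closed in the complementary component $V$ of ${Y_\phi'}^{\pm}(\epsilon_1)$ that contains it --- shows that $T=V$; thus $T$ is one of the annuli or cross caps already absorbed into $X_\phi^{\pm}(\epsilon_1)$, and the (possibly topology-changing) attachment of $T$ leaves $X_\phi^{\pm}$ unchanged near $T$. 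If instead $T$ is an annulus with one boundary circle on $\partial B$ and the other on $\partial C$, then $T$ is a half-open collar pushed into the complementary component $V$ of ${Y_\phi'}^{\pm}(\epsilon_1)$ containing it; deleting this collar changes neither the Euler characteristic nor the homeomorphism type of $V$, so $V$ is an annulus or cross cap precisely when the new complementary component $V\setminus\mathring T$ of ${Y_\phi'}^{\pm}(\epsilon_2)$ is, and in that case $X_\phi^{\pm}$ is again unchanged near $T$, while otherwise $X_\phi^{\pm}(\epsilon_2)$ acquires exactly the collar $T$ along a boundary circle of $X_\phi^{\pm}(\epsilon_1)$. Running through every complementary component of ${Y_\phi'}^{\pm}(\epsilon_1)$ in this transition then yields the claim. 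Throughout I would invoke incompressibility (\cref{lemyef}.\ref{inco}) together with the elementary fact that a connected surface of negative Euler characteristic has no null-homotopic boundary circle, so that no complementary component is a disc and ``not absorbed into $X_\phi^{\pm}$'' means precisely ``of negative Euler characteristic''.

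To assemble the isotopy, I would first note that $X_\phi^{+}(\epsilon)\cap X_\phi^{-}(\epsilon)=\emptyset$ for every regular $\epsilon$: a complementary annulus or cross cap of ${Y_\phi'}^{+}(\epsilon)$ cannot contain a component of ${Y_\phi'}^{-}(\epsilon)$ for reasons of Euler characteristic, and, by incompressibility, cannot be contained in, or share a boundary circle with, a complementary component of ${Y_\phi'}^{-}(\epsilon)$ of negative Euler characteristic; hence the pieces absorbed on the two sides are disjoint from one another and from the opposite ${Y_\phi'}^{\mp}(\epsilon)$. In particular $\partial X_\phi^{+}(\epsilon_1)$ and $\partial X_\phi^{-}(\epsilon_1)$ are disjoint. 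A collar enlargement as above is realized by an ambient isotopy of $S$ supported in an arbitrarily thin neighborhood of $\partial X_\phi^{\pm}(\epsilon_1)$; choosing these neighborhoods disjoint for the two signs --- possible since $\partial X_\phi^{+}(\epsilon_1)$ and $\partial X_\phi^{-}(\epsilon_1)$ are disjoint and the added collars lie in $X_\phi^{+}(\epsilon_2)$ respectively $X_\phi^{-}(\epsilon_2)$, which are themselves disjoint --- and composing the two resulting diffeomorphisms gives the required isotopy of $S$. The main obstacle is the classification of the attached pieces in the preceding paragraph: the bookkeeping of which boundary circle of an attached annulus or cross cap lies on $\partial B$ and which on $\partial C$, and the verification that exactly the topology-changing attachments are invisible to the operation $Y\mapsto X$ because the relevant complementary region was already an annulus or cross cap.
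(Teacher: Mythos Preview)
Your proof is correct and follows essentially the same approach as the paper: reduce via \eqref{yx} and \cref{lemeul4} to sign-by-sign equality of Euler characteristics, then show that $X_\phi^{\pm}(\epsilon_2)$ arises from $X_\phi^{\pm}(\epsilon_1)$ by attaching collar annuli along the boundary, and finally realize this as an ambient isotopy. The paper's own proof only exemplifies one of the several cases in the passage from the $Y'$-spaces to the $X$-spaces, so your systematic classification of the attached pieces $T$ (and the explicit disjointness of $X_\phi^{+}$ and $X_\phi^{-}$) is in fact more complete than what the paper writes out; the only imprecision is that when a complementary region $V$ has several boundary circles, one should remove all the attached collars at once rather than a single $T$, but this does not affect the conclusion.
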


\begin{proof}
By \eqref{yx}, we have $\chi(Y_\phi'(\epsilon_1))=\chi(Y_\phi'(\epsilon_2))$.
Hence ${Y_\phi'}^\pm(\epsilon_2)$ arises from ${Y_\phi'}^\pm(\epsilon_1)$
by attaching annuli and cross caps, by \cref{lemeul4}.
The point of the argument below is that all boundary curves of ${Y_\phi'}^\pm(\epsilon_2)$
arise by attaching an annulus $A$ to a boundary curve of ${Y_\phi'}^\pm(\epsilon_1)$.
Then $\phi$ is equal to $\epsilon_1$ on one of the boundary curves of $A$
and equal to $\epsilon_2$ on the other.

Without loss of generality, we only consider the $X^+$-spaces.
It suffices to show that $X_\phi^+(\epsilon_2)$ arises from $X_\phi^+(\epsilon_1)$
by attaching annuli $A$ such that $\phi$ is equal to $\epsilon_1$
on one of the boundary curves of $A$ and equal to $\epsilon_2$ on the other.

There are several cases in the passage from the $Y'$-spaces to the $X$-spaces.
We exemplify the argument in one of the cases.

Suppose that, in the passage from ${Y_\phi'}^+(\epsilon_1)$ to $X_\phi^+(\epsilon_1)$,
an annulus $A$ is attached to ${Y_\phi'}^+(\epsilon_1)$ such that $\phi$ is
equal to $\epsilon_1$ on the boundary curves of $A$.
Then $\phi=\epsilon_1$ on $\partial A$ and either $\phi>\epsilon_2$ on $A$
or else, by \cref{lemeul4}, there are disjoint annuli $A',A''\subseteq A$,
each of them sharing a boundary curve with $A$,
such that $\phi$ is equal to $\epsilon_2$ on the other boundary curve
and such that $A'$ and $A''$ belong to ${Y_\phi'}^+(\epsilon_2)$.
By \cref{annulus}, we get an annulus $A'''$ in $A$ between $A'$ and $A''$
and sharing one of its boundary curves with $A'$ and the other with $A''$.
In particular, $\phi$ is equal to $\epsilon_2$ on both boundary curves of $A'''$.
We conlude that $A=A'\cup A'''\cup A''$ belongs to $X_\phi(\epsilon_2)$.
\end{proof}

\newpage
\section{End of proof of \cref{theo}}
By the above Lemmas \ref{lemeul} -- \ref{lemeul6},
we obtain a partition of the unit sphere $\mathbb{S}$ in $\mathbb E$ into the subsets
\begin{equation}
  \mathcal{C}_i := \{ \phi \in \mathbb{S} \mid
  \text{$\chi(X_\phi(\epsilon)) = i$ for all sufficiently small $\epsilon>0$}\},
\end{equation}
where $\chi(S)\le i<0$.
By definition, $\phi\in\mathcal{C}_i$ if and only if $-\phi\in\mathcal{C}_i$.
Hence $\mathcal{C}_i$ is the preimage of the subset $\mathcal{B}_i=\pi(\mathcal C_i)$
in the projective space $\mathbb{P}=\mathbb{S}/\pm\id$
under the canonical projection $\pi:\mathbb{S} \rightarrow \mathbb{P}$.

\begin{lem}\label{lemiso3}
Let $\epsilon>0$ and $U\subseteq\mathbb S$ be the subset of $\phi$
such that $\epsilon$ is $\phi$-regular.
Then $U$ is open and the isotopy types of
$(S,X^+_\phi(\epsilon),X^-_\phi(\epsilon))$
are locally constant as functions of $\phi\in U$.
\end{lem}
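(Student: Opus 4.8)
\emph{Strategy.}
I would prove the two assertions separately. Openness of $U$ is a soft consequence of the finite-dimensionality of $\mathbb E$ together with the fact that being a regular value is an open condition. For the local constancy I would show that for every $\psi\in\mathbb S$ sufficiently $C^1$-close to a fixed $\phi\in U$ there is a diffeomorphism $h$ of $S$, isotopic to the identity, with $h(\{\phi\ge\epsilon\})=\{\psi\ge\epsilon\}$, $h(\{\phi\le-\epsilon\})=\{\psi\le-\epsilon\}$ and $\psi\circ h=\phi$ on a neighborhood of $\{|\phi|=\epsilon\}$; such an $h$ then automatically carries $X_\phi^\pm(\epsilon)$ onto $X_\psi^\pm(\epsilon)$, because every step in the construction of $X_\phi^\pm(\epsilon)$ out of these sub- and superlevel sets is invariant under self-diffeomorphisms of $S$.

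\emph{Openness.}
Since only finitely many eigenvalues are $\le\Lambda(S)$ and each has finite multiplicity, $\mathbb E$ is finite-dimensional; as eigenfunctions are smooth, the $C^1$-norm restricts to a norm on $\mathbb E$, hence is equivalent to the $L^2$-norm there, so $L^2$-convergence in $\mathbb E$ implies $C^1$-convergence. Let $\phi\in U$. Because $\{|\phi|=\epsilon\}$ is compact and $|\nabla\phi|>0$ on it, there are $\eta,\delta>0$ with $|\nabla\phi|\ge\eta$ on $K:=\{\epsilon-\delta\le|\phi|\le\epsilon+\delta\}$. If $\psi\in\mathbb S$ has $\|\psi-\phi\|_{C^1}$ small enough, then $\{|\psi|=\epsilon\}\subseteq\mathring K$ and $|\nabla\psi|\ge\eta/2>0$ there, so $\pm\epsilon$ are regular values of $\psi$, i.e. $\psi\in U$.

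\emph{The isotopy.}
Fix $\phi\in U$ and $\eta,\delta,K$ as above, and take $\psi\in U$ with $\|\psi-\phi\|_{C^1}$ small. For $s\in[0,1]$ set $\psi_s:=(1-s)\phi+s\psi$; then $|\nabla\psi_s|\ge\eta/2$ on $K$ and $\{|\psi_s|=\epsilon\}\subseteq\mathring K$ for all $s$. Choose $\rho\in C^\infty(S,[0,1])$ equal to $1$ on $\{\epsilon-\delta/2\le|\phi|\le\epsilon+\delta/2\}$ with $\supp\rho\subseteq\mathring K$, and let $V_s:=-\rho\,(\psi-\phi)\,|\nabla\psi_s|^{-2}\,\nabla\psi_s$; this is a smooth time-dependent vector field (smooth because $\rho$ is supported where $|\nabla\psi_s|\ge\eta/2$) of size $O(\|\psi-\phi\|_\infty)$. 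As $S$ is closed, the flow $h_s$ of $V_s$ with $h_0=\id$ is a smooth isotopy of $S$, supported in $\mathring K$, and $\frac{d}{ds}\psi_s(h_s(x))=(1-\rho)(\psi-\phi)(h_s(x))$. For $\|\psi-\phi\|_\infty$ small (depending on $\phi,\delta$) the flow displaces $\phi$-values by less than $\delta/4$, so trajectories starting near $\{|\phi|=\epsilon\}$ remain in $\{\rho=1\}$; hence $\psi\circ h_1=\phi$ on a neighborhood of $\{|\phi|=\epsilon\}$ while $h_1=\id$ off $\mathring K$, and a routine comparison of $\phi$- and $\psi$-values gives $h_1(\{\phi\ge\epsilon\})=\{\psi\ge\epsilon\}$, $h_1(\{\phi\le-\epsilon\})=\{\psi\le-\epsilon\}$, and therefore $h_1(\{|\phi|\le\epsilon\})=\{|\psi|\le\epsilon\}$ since $\pm\epsilon$ are regular values of $\psi$. (Existence of $h_1$ also follows from a standard moving-the-regular-level argument, e.g. Thom's first isotopy lemma applied to $(x,s)\mapsto(\psi_s(x),s)$ near $\{\psi_s=\pm\epsilon\}$.)

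\emph{Transport, and the main obstacle.}
Set $h:=h_1$. Then $h(Z_\phi(\epsilon))=h(\{|\phi|\le\epsilon\})=\{|\psi|\le\epsilon\}=Z_\psi(\epsilon)$, and since ``a component is contained in the interior of an embedded closed disc in $S$'' is preserved by the self-diffeomorphism $h$, the components deleted in forming $Z_\phi'(\epsilon)$ correspond to those deleted for $Z_\psi'(\epsilon)$; hence $h(Z_\phi'(\epsilon))=Z_\psi'(\epsilon)$ and $h(Y_\phi(\epsilon))=Y_\psi(\epsilon)$. As $h$ maps components to components preserving Euler characteristic, $h(Y_\phi'(\epsilon))=Y_\psi'(\epsilon)$. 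By \cref{lemyef}.\ref{sign}, $\phi$ equals $+\epsilon$ on the boundary of a positive component of $Y_\phi'(\epsilon)$ and $-\epsilon$ on that of a negative one; since $\psi\circ h=\phi$ on $\{|\phi|=\epsilon\}\supseteq\partial C$ for every component $C$ of $Y_\phi'(\epsilon)$, the image $h(C)$ has the same sign as $C$, so $h({Y_\phi'}^\pm(\epsilon))={Y_\psi'}^\pm(\epsilon)$. Finally $h$ preserves the ``annulus'' and ``cross cap'' types of the complementary components, hence $h(X_\phi^\pm(\epsilon))=X_\psi^\pm(\epsilon)$, which is precisely the isotopy claimed, valid for all $\psi$ in a neighborhood of $\phi$. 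The only real difficulty is the construction of $h$: one must keep both $\epsilon$ and $-\epsilon$ regular values along the whole family $\psi_s$ -- which is exactly where $\phi\in U$ and the compactness of $\{|\phi|=\epsilon\}$ are used -- and arrange $\psi\circ h=\phi$ near the level set so that the sign bookkeeping in the last step goes through; everything else is routine, since each ingredient in the definition of $X_\phi^\pm(\epsilon)$ is a diffeomorphism-invariant notion.
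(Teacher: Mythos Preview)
Your proof is correct and follows essentially the same strategy as the paper: use finite-dimensionality of $\mathbb E$ to pass from $L^2$- to $C^1$-closeness, then exploit the regularity of $\pm\epsilon$ to see that the level sets, and hence the $X^\pm$-spaces, vary by ambient isotopy. The paper is terser---it invokes the implicit function theorem on $F(\phi,z)=\phi(z)$ to show the level curves depend smoothly on $\phi$ and then says the rest ``follows easily from the construction of the $X^\pm$-spaces''---whereas you build the isotopy explicitly via a Moser-type flow and spell out the transport through each stage $Z\to Z'\to Y\to Y'\to {Y'}^\pm\to X^\pm$; but the underlying idea is the same.
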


\begin{proof}
Note that $U$ is open since $S$ is compact.
Consider the map
\begin{equation*}
  F\colon U\times S\to\R, \quad F(\phi,z)=\phi(z).
\end{equation*}
Since $\mathbb E$ is finite dimensional, any two norms on $\mathbb E$ are equivalent.
In particular, $F$ is continuously differentiable.
If $\phi\in U$ and $z\in S$ satisfy $\phi(z)=\epsilon$,
then $d\phi_z\ne0$, and hence $dF_{(\phi,z)}\ne0$.
Choose a vector $v\in T_zS$ with $dF_z(v)\ne0$
and coordinates $(x,y)$ of $S$ about $z$ such that $z=(0,0)$ and $v=\partial/\partial y$.
Then, by the implicit function theorem,
there are open intervals $I\ni0$ and $J\ni\epsilon$,
an open neighborhood $W$ of $\phi$ in $\mathbb E$,
and a smooth function $y\colon W\times I\times J\to\R$
such that $F(\psi,x,y(\psi,x,\tau))=\tau$ for all $(\psi,x,\tau)\in W\times I\times J$.
It follows that the families of curves $\{\phi=\epsilon\}$ depend smoothly on $\phi\in U$;
and similarly for $-\epsilon$.
The claim of \cref{lemiso3} now follows easily from the construction of the $X^\pm$-spaces.
\end{proof}

Now we are ready for the final steps of the proof of \cref{theo}

\begin{proof}[Proof of \cref{theo}]
For $\mathcal B_i=\pi(\mathcal C_i)$ as above,
we show that $\pi\colon\mathbb S\to\mathbb P$ is trivial over $\mathcal B_i$.
To that end, we note that, for $\phi\in\mathcal C_i$, we have $-\phi\in\mathcal C_i$
and that
\begin{equation*}
  (S,X^+_{-\phi}(\epsilon),X^-_{-\phi}(\epsilon))
  = (S,X^-_\phi(\epsilon),X^+_\phi(\epsilon)).
\end{equation*}
Now $(S,X^+\phi(\epsilon),X^-_\phi(\epsilon))$
is not isotopic to $(S,X^-_\phi(\epsilon),X^+_\phi(\epsilon))$, by \cref{ivan}.
Hence the partition of $\mathcal C_i$ into the open subsets $U_{i,j}$
with the same isotopy type has the property that $U_{i,j}\cap -U_{i,j}=\emptyset$.
Note also that $U_{i,j}\cup -U_{i,j}$ is the preimage of a subset $V_{i,j}$ in $\mathbb P$.
It follows that $\pi|_{\mathcal B_i}$ is trivial.
We conclude that $-\chi(S)>\dim\mathbb P=\dim\mathbb E-1$, by Lemma 8 in \cite{Se}
(see also the final paragraph in the proof of Lemme 5 in \cite{OR}).
\end{proof}

\newpage
\appendix
\section{On the topology of surfaces}

For the convenience of the reader (and the authors),
we collect some facts from the topology of surfaces.

In what follows, let $S$ be a surface of finite type, that is,
$S$ is diffeomorphic to the interior of a compact surface with boundary,
with Euler number $\chi(S)\le0$.
In other words, $S$ is of finite type,
but is not diffeomorphic to the sphere or the real projective plane.
In the orientable case, the first two assertions
are Corollary A.7 and Proposition A.11 in \cite{Bu2}.

\begin{lem}\label{lemdisc}
Any homotopically trivial simple closed curve in $S$ bounds an embedded disc. \qed
\end{lem}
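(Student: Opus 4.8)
The plan is to prove this via the universal covering $p\colon\tilde S\to S$; this gives a uniform argument in the orientable and non-orientable cases (in the orientable case the statement is also Corollary~A.7 in \cite{Bu2}). Since $S$ is of finite type with $\chi(S)\le0$, it is neither the sphere nor the projective plane, so its universal covering surface $\tilde S$ is homeomorphic to $\R^2$ (the simply connected surfaces being $S^2$ and $\R^2$, and $S^2$ being ruled out); in particular the deck transformation group $\Gamma\cong\pi_1(S)$ acts freely and properly discontinuously on $\tilde S$.

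Let $c$ be a homotopically trivial simple closed curve in $S$. Being null-homotopic, $c$ lifts to a loop $\tilde c$ in $\tilde S$, and since $p$ is a local homeomorphism and $c$ is injective, $\tilde c$ is again a simple closed curve. Moreover each translate $\gamma\tilde c$, $\gamma\in\Gamma$, is a connected component of the $1$-manifold $p^{-1}(c)$, so any two of the curves $\tilde c$ and $\gamma\tilde c$ are either equal or disjoint. By the Jordan--Schoenflies theorem in $\tilde S\cong\R^2$, the curve $\tilde c$ bounds an embedded closed disc $\tilde D$, namely the closure of the bounded component of $\tilde S\setminus\tilde c$.

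The heart of the matter is to show that $\gamma\tilde D\cap\tilde D=\emptyset$ for every $\gamma\in\Gamma\setminus\{1\}$. Suppose not. If $\gamma\tilde c=\tilde c$, then $\gamma$ preserves the two components of $\tilde S\setminus\tilde c$ (a homeomorphism of $\R^2$ maps bounded sets to bounded sets), hence $\gamma\tilde D=\tilde D$; but then $\gamma|_{\tilde D}$ has a fixed point by Brouwer's theorem, contradicting the freeness of the action. If $\tilde c$ and $\gamma\tilde c$ are disjoint, then each is connected and lies in a single component of the complement of the other, so the closed discs $\tilde D$ and $\gamma\tilde D$ are either disjoint or nested; disjointness is excluded by assumption, so, after possibly replacing $\gamma$ by $\gamma^{-1}$, we have $\gamma\tilde D\subsetneq\tilde D$, the inclusion being strict because the boundary curves $\tilde c$ and $\gamma\tilde c$ are disjoint. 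Iterating, $\gamma^n\tilde D$ is a strictly decreasing sequence of compact subsets of $\tilde D$; if $\gamma$ had finite order $n$ this already contradicts $\tilde D=\gamma^n\tilde D\subsetneq\tilde D$, and if $\gamma$ has infinite order then the orbit of any point of $\tilde D$ is an infinite set contained in the compact set $\tilde D$, contradicting proper discontinuity.

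Granting this, $p|_{\tilde D}$ is injective: if $p(x)=p(y)$ with $x,y\in\tilde D$ then $y=\gamma x$ for some $\gamma\in\Gamma$, and $\gamma\tilde D\cap\tilde D\ne\emptyset$ forces $\gamma=1$. A continuous injection from the compact disc $\tilde D$ to the Hausdorff space $S$ is a homeomorphism onto its image, so $D:=p(\tilde D)$ is an embedded closed disc in $S$ with $\partial D=c$. I expect the middle step --- ruling out overlaps of distinct translates of $\tilde D$ --- to be the only real obstacle; it is exactly where the hypothesis $\chi(S)\le0$ enters, through $\tilde S\cong\R^2$, and it is where Jordan--Schoenflies, Brouwer's fixed point theorem, and proper discontinuity all get used.
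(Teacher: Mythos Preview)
Your proof is correct. The paper itself provides no argument for this lemma; it is stated without proof, the orientable case being attributed to Corollary~A.7 in \cite{Bu2}. Your universal-cover argument fills this gap uniformly for orientable and non-orientable $S$: the reduction to $\tilde S\cong\R^2$ via $\chi(S)\le0$, the Jordan--Schoenflies disc $\tilde D$, and the Brouwer/proper-discontinuity dichotomy ruling out overlaps of nontrivial translates of $\tilde D$ are all sound. One small simplification: in the nested case $\gamma\tilde D\subsetneq\tilde D$ you can bypass orbits of points and invoke proper discontinuity directly, since $\gamma^n\tilde D\cap\tilde D=\gamma^n\tilde D\ne\emptyset$ for all $n\ge0$ already exhibits infinitely many deck transformations moving the compact set $\tilde D$ into itself.
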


\begin{lem}\label{annulus}
Let $c_0$ and $c_1$ be smooth two-sided simple closed curves in $S$
which are freely homotopic (up to their orientation) and which do not intersect.
Then $c_0\cup c_1$ bounds an embedded annulus in $S$. \qed
\end{lem}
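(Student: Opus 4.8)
The plan is to pull the configuration back to the cover of $S$ determined by $c_0$, where the statement becomes an instance of the Schoenflies theorem in an annulus, and then to push the resulting annulus back down to $S$. Throughout I restrict to the case that $c_0$, and hence $c_1$, is not null-homotopic, which is the only case needed here.

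First I would pass to the covering $p\colon\tilde S\to S$ with $p_*\pi_1(\tilde S)=\langle\gamma\rangle$, where $\gamma\in\pi_1(S)$ is the class of $c_0$. Since $c_0$ is a two-sided essential simple closed curve, it lifts to an embedded circle $\tilde c_0$ that generates $\pi_1(\tilde S)$ and on which $p$ is injective; in particular $\tilde S$ is an open annulus and $\tilde c_0$ is isotopic to its core. As $c_1$ is freely homotopic to $c_0$ up to orientation, a free homotopy between them lifts to $\tilde S$, so $c_1$ lifts to an embedded circle $\tilde c_1$ that is freely homotopic to $\tilde c_0$, hence again isotopic to the core of $\tilde S$; moreover $\tilde c_0\cap\tilde c_1=\emptyset$, since $\tilde c_i\subseteq p^{-1}(c_i)$ while $c_0\cap c_1=\emptyset$. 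Two disjoint circles, each isotopic to the core, cut the open annulus $\tilde S$ into three pieces, and the closure $\tilde A$ of the middle one is an embedded annulus with $\partial\tilde A=\tilde c_0\cup\tilde c_1$.

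It then remains to see that $p$ restricts to an embedding of $\tilde A$ into $S$; for then $A:=p(\tilde A)$ is an embedded annulus with $\partial A=c_0\cup c_1$, as desired. For this I would lift once more to the universal cover $\hat S\cong\R^2$ and factor $p$ as $\hat S\xrightarrow{r}\tilde S\xrightarrow{p}S$. The preimage $\hat A:=r^{-1}(\tilde A)$ is a closed strip on which $\langle\gamma\rangle$ acts by translations with quotient $\tilde A$, bounded by two properly embedded lines $\ell_0,\ell_1$ which are lifts of $c_0$ and of $c_1$. For any $g\in\pi_1(S)$, the lines $g\ell_0$ and $g\ell_1$ are again lifts of $c_0$ and of $c_1$; since distinct lifts of an embedded curve are disjoint, and lifts of $c_0$ are disjoint from lifts of $c_1$, each of $g\ell_0,g\ell_1$ is equal to or disjoint from each of $\ell_0,\ell_1$. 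A short argument in the plane, using that a properly embedded line separates $\R^2$, then shows that $g\hat A\cap\hat A\ne\emptyset$ forces $g\in\langle\gamma\rangle$; hence $p$ is injective on $\tilde A$.

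The main obstacle is precisely this last step --- turning the immersed annulus $p(\tilde A)$ into an embedded one, i.e.\ showing that $g\hat A\cap\hat A\ne\emptyset$ implies $g\in\langle\gamma\rangle$. This rests on a careful analysis of how the various lifts of the embedded curves $c_0$ and $c_1$ lie in $\R^2$, and it is the one place where the topology of surfaces is genuinely used; in particular one must dispose separately of the degenerate case in which $c_0$ bounds a M\"obius band, where $c_1$ is then a parallel copy of $c_0$ and the annulus is visible directly. I would also take care over the auxiliary fact, used above, that $\tilde S$ is an annulus rather than a M\"obius band, which relies on $c_0$ being two-sided. An equally elementary route that avoids covering spaces altogether is to start from a map $S^1\times[0,1]\to S$ realizing the free homotopy, with the given embedded essential curves as its boundary, and to remove the double points of its interior by repeated innermost-disc and bigon surgeries.
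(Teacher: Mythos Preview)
The paper gives no proof of this lemma: it carries a bare \qed and is listed among standard facts from surface topology collected in the appendix, with a reference to Buser's book for the orientable case. So there is nothing in the paper to compare your argument against.

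Your covering-space outline is one of the standard proofs and is essentially correct. Two remarks on the points you flag. First, restricting to essential $c_0$ is not merely a convenience: the lemma as literally stated is in fact false for null-homotopic curves (two small disjoint inessential circles on a torus are freely homotopic and disjoint but bound no embedded annulus), so some such hypothesis is genuinely needed; the only application in the paper, inside the proof of \cref{lemeul6}, involves boundary circles of incompressible subsurfaces with negative Euler characteristic, which are indeed essential. Second, the degenerate case you isolate --- $c_0$ bounding a M\"obius band --- is exactly the situation in which $\gamma=[c_0]$ fails to be primitive in $\pi_1(S)$, so that $\operatorname{Stab}(\ell_0)\supsetneq\langle\gamma\rangle$ and your planar separation argument does not directly force $g\in\langle\gamma\rangle$; your instinct to treat it separately is correct, and the fix is easy (replace $\langle\gamma\rangle$ by the maximal cyclic subgroup containing it, whose corresponding cover is the open M\"obius band, and argue there). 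The alternative bigon-surgery route you mention at the end is equally standard and is closer in spirit to the argument in Buser.
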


\begin{lem}\label{lemdisc2}
Let $C\subseteq S$ be a connected subsurface with smooth boundary
which is closed as a subset of $S$.
Assume that $C$ contains a closed curve which is homotopic to zero in $S$, but not in $C$.
Then $S\setminus\mathring{C}$ contains a connected component
which is diffeomorphic to a closed disc.
\end{lem}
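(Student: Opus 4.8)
The plan is to reduce to a \emph{simple} closed curve and then run an innermost-disc argument based on \cref{lemdisc}.

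First some clean-up and the main reduction. We may assume $C\neq S$ (otherwise the hypothesis is vacuous) and that $C$ is not a disc (otherwise the given curve would be null-homotopic in $C$); since $C$ is closed in the connected surface $S$ and $C\neq S$, we have $\partial C\neq\emptyset$. Pushing the given curve $\gamma$ across a collar of $\partial C$, we may also assume $\gamma\subseteq\mathring C$, which changes neither its free homotopy class in $S$ nor in $C$. The crucial step is to arrange in addition that $\gamma$ is simple: if $\pi_1(C)\to\pi_1(S)$ is not injective, then some embedded circle in $C$ is essential in $C$ but bounds a disc in $S$. This is the two-dimensional loop theorem, and it is used in exactly this ``without loss of generality'' spirit in the proof of \cref{lemyef}; one can quote it, or avoid it by an elementary surgery argument — take any null-homotopy $f\colon D^2\to S$ of $\gamma$, make it transverse to $\partial C$, and work with an innermost circle of $f^{-1}(\partial C)$: using that the fundamental group of a surface with non-empty boundary is torsion-free, such a circle either lets one reduce the number of components of $f^{-1}(\partial C)$ or directly forces a complementary region of $C$ to be a disc. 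In any case, this reduction is the only genuinely non-routine point; everything that follows is elementary surface topology.

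So assume now that $\gamma\subseteq\mathring C$ is a simple closed curve, null-homotopic in $S$ but not in $C$. By \cref{lemdisc} it bounds an embedded closed disc $D\subseteq S$. We cannot have $D\subseteq C$, since $\gamma$ is essential in $C$. On the other hand $\partial D=\gamma\subseteq\mathring C$ is disjoint from $\partial C$, so $D\cap\partial C=\mathring D\cap\partial C$ is at once compact (closed in the compact set $D$) and open in the $1$-manifold $\partial C$, hence a finite disjoint union of circles lying in $\mathring D$, each of them being an entire boundary component of $C$ (a circle that is open and closed in $\partial C$ is a component of $\partial C$). This union is non-empty, for otherwise $D$ would lie in $\mathring C$. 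Ordering these circles by inclusion of the subdiscs of $D$ they cut off on the side away from $\gamma$, I choose an innermost one, $\sigma$. Then $\sigma$ bounds a disc $D'\subseteq D$ with $\partial D'=\sigma$ and $\mathring{D'}\cap\partial C=\emptyset$, so $\mathring{D'}$ — being connected and disjoint from $\partial C$ — lies entirely in $\mathring C$ or entirely in $S\setminus C$.

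If $\mathring{D'}\subseteq\mathring C$, then $D'\subseteq C$, and $\mathring{D'}$ is a connected component of $C\setminus\sigma$: it is open in $C\setminus\sigma$ (being open in $S$), and closed there because its closure in $C$ equals $D'$, which meets $C\setminus\sigma$ again in $\mathring{D'}$. Since deleting a boundary circle does not disconnect a surface, $C\setminus\sigma$ is connected, so $\mathring{D'}=C\setminus\sigma$ and $C=D'$ is a disc — contradicting that $\gamma$ is essential in $C$. Therefore $\mathring{D'}\subseteq S\setminus C$, i.e. $D'\subseteq S\setminus\mathring C$. Running the same argument inside the surface $S\setminus\mathring C$, whose boundary is $\partial C$ and of which $\sigma$ is a boundary circle, the component $W$ of $S\setminus\mathring C$ containing $D'$ satisfies $W\setminus\sigma=\mathring{D'}$, hence $W=D'$. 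Thus $W$ is a closed disc, and this is the component of $S\setminus\mathring C$ we were looking for.
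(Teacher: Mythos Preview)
Your proof is correct and follows the same outline as the paper's: reduce to a simple closed curve $\gamma\subseteq\mathring C$ (the paper also does this ``without loss of generality'', with no more justification than you give), take the disc $D$ it bounds via \cref{lemdisc}, and locate a disc component of $S\setminus\mathring C$ inside $D$. The only real difference is bookkeeping in the last step. Instead of choosing an \emph{innermost} boundary circle and splitting into the two cases $\mathring{D'}\subseteq\mathring C$ versus $\mathring{D'}\subseteq S\setminus C$, the paper sets $B=C\cap D$ and first observes that $B$ is connected (your clopen argument, applied to a hypothetical component of $B$ not meeting $\gamma=\partial D$, gives exactly this); it then follows that \emph{every} boundary circle of $B$ in $\mathring D$ cuts off a subdisc whose interior is disjoint from $B$, hence from $C$. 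In effect, your case $\mathring{D'}\subseteq\mathring C$ is excluded up front by the connectedness of $B$, and beyond that the two arguments coincide. Your final paragraph, verifying that the disc $D'$ is actually an entire component of $S\setminus\mathring C$, is a detail the paper leaves implicit.
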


\begin{proof}
Without loss of generality we may assume that $\mathring{C}$
contains a smoothly immersed simple closed curve $c$
which is homotopic to zero in $S$, but not in $C$.
Now $c$ bounds a smooth disc $D$ in $S$, by \cref{lemdisc},
and $B=C\cap D$ is a smooth and closed subsurface in $D$.
Furthermore, $B$ is connected since $c=\partial D\subseteq B$
and $B\ne D$ since otherwise $c$ would be homotopic to zero in $C$.
Hence the interior of $D$ contains boundary circles $c'$ of $B$,
and the interior $D'$ of any such $c'$ in $D$ is disjoint from $C$.
By the Schoenfliess theorem, any such $D'$ is diffeomorphic to a disc.
\end{proof}

A subsurface $C\subseteq S$ is called \emph{incompressible} in $S$
if any closed curve in $C$, which is homotopic to zero in $S$,
is already homotopic to zero in $C$.

\begin{cor}\label{corinco}
Let $C\subseteq S$ be a connected subsurface with boundary
which is closed as a subset of $S$.
Assume that no component of $S\setminus\mathring{C}$ is diffeomorphic to a closed disc.
Then $C$ is incompressible in $S$.
\end{cor}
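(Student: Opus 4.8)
The plan is to obtain \cref{corinco} as the contrapositive of \cref{lemdisc2}, with the hypotheses translated into the language of incompressibility. So I would argue by contradiction: suppose $C$ is \emph{not} incompressible in $S$. By the definition of incompressibility recalled just above the statement, this means that there is a closed curve $c$ in $C$ which is homotopic to zero in $S$ but is not homotopic to zero in $C$.

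Now I would check that this puts us precisely in the situation of \cref{lemdisc2}. That lemma asks for a connected subsurface $C \subseteq S$ with smooth boundary, closed as a subset of $S$, which contains a closed curve homotopic to zero in $S$ but not in $C$; all of these hold by hypothesis together with the curve $c$ produced in the previous step. Applying \cref{lemdisc2}, we conclude that $S \setminus \mathring{C}$ has a connected component diffeomorphic to a closed disc. This directly contradicts the standing assumption of \cref{corinco} that no component of $S \setminus \mathring{C}$ is a closed disc. Hence $C$ is incompressible in $S$, as claimed.

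I do not expect any real obstacle here: the substantive content — the use of \cref{lemdisc} to fill in a disc bounded by a homotopically trivial simple closed curve, and the Schoenfliess theorem to recognize the resulting pieces as discs — has already been absorbed into \cref{lemdisc2}, and \cref{corinco} is merely its contrapositive repackaged. The only point deserving a moment's attention is bookkeeping: one should confirm that "homotopic to zero" in the definition of an incompressible subsurface refers to free homotopy of loops in the same sense as in \cref{lemdisc2} (it does, by the definitions stated in the excerpt), and, if one is being careful, that "with boundary" in the hypothesis of \cref{corinco} is meant in the smooth sense required by \cref{lemdisc2}. With that matching confirmed, the two statements dovetail and the proof is complete.
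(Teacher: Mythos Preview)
Your proposal is correct and matches the paper's approach: the paper states \cref{corinco} immediately after \cref{lemdisc2} without proof, treating it as an immediate contrapositive. Your observation about the slight mismatch in hypotheses (``with boundary'' versus ``with smooth boundary'') is apt but harmless in context.
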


For  a proof of the following result, we refer to Chapter 1 of \cite{Iv}.

\begin{thm}\label{ivan}
Let $S$ be a compact and connected surface with $\chi(S)<0$
and $L\subseteq S$ be a closed one-dimensional submanifold.
Let $F\colon S\to S$ be a diffeomorphism which is isotopic to the identity
and such that $F(L)=L$.
Then $F$ leaves all components of $L$ and $S\setminus L$ invariant.
\end{thm}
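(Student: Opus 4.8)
The plan is to argue in the universal cover, exploiting that a diffeomorphism isotopic to the identity moves every point a uniformly bounded distance, so that, once lifted to $\H^2$, it induces the identity on the circle at infinity. Two reductions come first. If $\partial S\ne\emptyset$ one passes to the double of $S$ along $\partial S$, extending $L$ by the reflection and $F$ by a map commuting with the reflection (after first isotoping $F$ to commute with it near $\partial S$); the double is closed with negative Euler characteristic, and its case implies the stated one, so I assume $S$ closed. I also assume that no component of $L$ bounds an embedded disc in $S$, which is the situation in all applications (where the relevant curves bound subsurfaces of negative Euler characteristic); this hypothesis is genuinely needed, since otherwise two disjoint small discs can be interchanged by a diffeomorphism isotopic to the identity. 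Fix a hyperbolic metric on $S$, write $p\colon\H^2\to S$ for the universal cover, let $\Gamma=\pi_1(S)$ act by deck transformations, and set $S^1=\partial_\infty\H^2$.

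I first produce the lifted map. Lifting an isotopy from $\id$ to $F$ gives maps $\tilde F_t$ of $\H^2$ with $\tilde F_0=\id$; since $\tilde F_t\gamma\tilde F_t^{-1}$ is a deck transformation depending continuously on $t$ and $\Gamma$ is discrete, $\tilde F:=\tilde F_1$ commutes with every $\gamma\in\Gamma$. By compactness of $S$ the isotopy moves every point a bounded distance, hence $d_{\H^2}(\tilde Fx,x)$ is bounded; a self-homeomorphism of $\H^2$ at bounded distance from the identity (together with its inverse) extends continuously by the identity on $S^1$, so $\tilde F$ extends to a homeomorphism $\bar{\tilde F}$ of $\H^2\cup S^1$ with $\bar{\tilde F}|_{S^1}=\id$.

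Next I show $F$ fixes every component of $L$. Fix such a component $c$, let $\gamma\in\Gamma$ be a primitive element representing it (a hyperbolic translation, or a glide reflection if $c$ is one-sided), with fixed points $\gamma^\pm\in S^1$, and let $\sigma$ be the closed geodesic freely homotopic to $c$. Put $\widetilde L=p^{-1}(L)$ and let $\mathcal L$ be the set of components of $\widetilde L$ that are $\la\gamma\ra$-invariant and have $\{\gamma^+,\gamma^-\}$ as endpoints. Since $\Gamma$ is torsion-free and $\gamma$ is primitive, the stabilizer in $\Gamma$ of the geodesic with endpoints $\gamma^\pm$ is exactly $\la\gamma\ra$; it follows that each component of $L$ freely homotopic to $\sigma$ contributes exactly one line to $\mathcal L$, so $\mathcal L$ is finite and nonempty (it contains the lift of $c$ determined by $\gamma$), and its elements, being pairwise disjoint lines with common endpoints $\gamma^\pm$, are linearly ordered by separation between $\gamma^-$ and $\gamma^+$. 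Because $F(L)=L$, $F$ is isotopic to the identity, and $\tilde F$ commutes with $\gamma$ and fixes $S^1$ pointwise, the map $\tilde F$ permutes $\mathcal L$ and preserves this order; an order-preserving self-bijection of a finite totally ordered set is the identity, so $\tilde F$ fixes every line of $\mathcal L$, and projecting gives $F(c)=c$.

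It remains to treat the components of $S\setminus L$. Knowing now that $F$ fixes each component of $L$, the same endpoint argument shows $\tilde F$ fixes \emph{every} component of $\widetilde L$: the image of such a line covers the same curve and keeps the same endpoints at infinity, and the lift of a given curve with prescribed endpoints is unique. If $R$ is a component of $\H^2\setminus\widetilde L$, then $\partial(\tilde FR)=\tilde F(\partial R)=\partial R$, and for each line $\ell\subseteq\partial R$ the homeomorphism $\tilde F$ maps $R$ into the same of the two half-planes bounded by $\ell$ as $R$ itself, since $\tilde F$ fixes $\ell$ and fixes $S^1$ and a half-plane is determined by the arc of $S^1$ it subtends; as $\widetilde L$ is properly embedded, the component of $\H^2\setminus\widetilde L$ adjacent to $\ell$ from a given side is unique, whence $\tilde FR=R$, and projecting, $F$ fixes every component of $S\setminus L$. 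I expect the real work to lie in the two supporting points of hyperbolic surface topology — the finiteness of $\mathcal L$, resting on the normalizer computation for $\la\gamma\ra$ in $\Gamma$ (with the analogous statement for glide reflections in the non-orientable case), and the extension of $\tilde F$ to the identity at infinity. A more elementary alternative is Ivanov's: isotope $F$ to fix a regular neighborhood of $L$ pointwise, cut $S$ along $L$, and induct on the complexity of the resulting pieces, verifying the base cases (pairs of pants, one-holed tori, one-holed Klein bottles) directly; this is the argument of \cite[Chapter~1]{Iv}.
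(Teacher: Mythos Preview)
The paper gives no proof of this statement; it simply refers the reader to Chapter~1 of \cite{Iv}. Your counterexample is valid: with $L$ consisting of two small circles bounding disjoint discs, a half-twist supported in a larger disc swaps them and is isotopic to the identity by Alexander's lemma, so the theorem as stated is false. Ivanov's result is for systems of essential curves, and the paper's formulation should carry that hypothesis. In the only place the paper invokes \cref{ivan}, at the end of the proof of \cref{theo}, the one-manifold in question is $\partial X_\phi^\pm(\epsilon)$; by construction its components bound incompressible subsurfaces of negative Euler characteristic and hence are not null-homotopic, so the missing hypothesis is satisfied there.

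With that hypothesis in place your hyperbolic argument is correct and standard: the lift $\tilde F$ is $\Gamma$-equivariant and at bounded distance from the identity, hence extends by the identity on $\partial_\infty\H^2$; for each essential component of $L$ the finitely many lifts with prescribed endpoints are linearly ordered and $\tilde F$ preserves that order, so fixes each; regions of $\H^2\setminus\widetilde L$ are then pinned down by which side of each bounding line they occupy. The points you flag as needing care---finiteness of $\mathcal L$ via the normalizer of $\la\gamma\ra$, and the boundary extension of $\tilde F$---are the right ones and are routine for torsion-free Fuchsian groups (and their non-orientable analogues). This is a genuinely different route from Ivanov's, which isotopes $F$ to fix a neighborhood of $L$ and inducts on the complexity of the cut-open pieces; your argument is more direct and conceptual, at the price of importing the hyperbolic structure, while Ivanov's stays entirely within surface topology.
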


\newpage


\end{document}